\newtheorem{example}{Example}[section]
\newtheorem{theorem}{Theorem}[section]
\newtheorem{thm}{Theorem}[section]
\newcommand{\be}{\begin{eqnarray}}
\newcommand{\ee}{\end{eqnarray}}
\newcommand{\bd}{\begin{displaymath}}
\newcommand{\ed}{\end{displaymath}}
\newcommand {\bad}{\begin{aligned}}
\newcommand {\ead}{\end{aligned}}
\newtheorem{n.b}[thm]{Note}
\newtheorem{lem}[thm]{Lemma}
\newtheorem{prop}[thm]{Proposition}
\newtheorem{cor}[thm]{Corollary}
\newcommand{\bal}{\mbox{\boldmath $\alpha$}}
\newcommand{\bmi}{\mbox{\footnotesize\boldmath $\mu$}}
\newcommand{\bmu}{\mbox{\boldmath $\mu$}}
\newcommand {\bdm}{\begin{displaymath}}
\newcommand {\edm}{\end{displaymath}}
\newcommand {\ben}{\begin{equation}}
\newcommand {\een}{\end{equation}}
\newtheorem{corollary}{Corollary}[section]
\newfont{\smoldita}{cmmib8}
\newfont{\boldita}{cmmib10}
\newfont{\bboldita}{cmmib10 scaled\magstep1}
\newcommand{\sem}[1]{\mbox{$\{e^{t#1}\}_{t \geq 0}$}}
\newcommand{\mbb}[1]{\mathbb{#1}}
\newcommand{\mb}[1]{\mathbf{#1}}
\newcommand{\nn}{\nonumber}
\newcommand {\seq}[2]{\mbox{$({#1}_{#2})_{{#2}\in {\Bbb N}}$}}
\newcommand{\p}{\partial}
\newcommand{\cl}[2]{\int\limits_{#1}^{#2}}
\newcommand{\la}{\lambda}
\newcommand{\mc}[1]{\mathcal{#1}}
\begin{document}

\title[Diffusion and transport problems on networks]
 {Semigroup approach to diffusion and transport problems on networks}

\author[Jacek Banasiak]{Jacek Banasiak}

\address{
University of Kwazulu-Natal,  Durban, South Africa \\ and\\
 Technical University of \L\'{o}d\'{z}, \L\'{o}d\'{z}, Poland}

\email{banasiak@ukzn.ac.za}

\thanks{Research of J.B. and A.F was done during NRF/IIASA SA YSSP at the University of Free State and was partly supported by  National Science Centre of Poland through the grant N N201605640. Research of P.N. was supported by TWOWS and the UKZN Research Fund.
}
\author{Aleksandra Falkiewicz}
\address{
Technical University of \L\'{o}d\'{z}, \L\'{o}d\'{z}, Poland}

\email{aleksandra.falkiewicz@gmail.com}

\author{Proscovia Namayanja}
\address{University of KwaZulu-Natal, Durban, South Africa}
\email{proscovia@aims.ac.za}

\subjclass{47D06, 35F45, 35K50, 05C90, 05C50}

\keywords{Networks, diffusion, transport, strongly continuous semigroups, positive semigroups}

\date{}

\maketitle

\begin{abstract}
Models describing transport and diffusion processes occurring along the edges of a graph and interlinked by its vertices have been recently receiving a considerable attention. In this paper we generalize such models and consider a network of transport or diffusion operators  defined on one dimensional domains and connected through boundary conditions linking the end-points of these domains in an arbitrary way (not necessarily as the edges of a graph are connected). We prove the existence of $C_0$-semigroups solving such problems and provide conditions fully characterizing when they are positive.

\end{abstract}

\section{Introduction}	
\label{sec1}
Recently there has been an interest in dynamical problems on graphs, where some evolution operators, such as transport or diffusion,  act on the edges of a graph and interact through its nodes. One can mention here quantum graphs, see e.g. \cite{Ka,Kos,Ku1,Ku2}, diffusion on graphs in probabilistic context, \cite{AB12, AG13, Kos},  transport problems, both linear and nonlinear, \cite{BaP, Bres, BD08, NagPhysD, KS04, MS07}, migrations, \cite{Ko}, and several other applications discussed in e.g. \cite{Ku2, DM}. In particular, the recent monograph \cite{DM} is a rich source of network models and methods. However, most of these works focus on a particular type of problems. For instance, in the quantum graph theory the main interest is to determine whether the operators defined on the edges of a graph are self-adjoint and the work is confined to the Hilbert space setting. Most papers on the linear transport theory on graphs focus on long term dynamics of the flow. Papers such as \cite{AB12, AG13, Kos}, motivated by probabilistic applications, look at Feller or Markov processes on graphs.

The present paper, which provides the theoretical foundation for \cite{BFN1}, is similar in spirit to \cite{AB12, AG13} in the sense that we prove the existence of strongly continuous semigroups in the space of continuous functions, as well as in the space of integrable functions, that solve the diffusion problem on a network. However, we extend the existing  results of \cite{AB12, AG13} by considering processes that are more general than the diffusion along edges of a geometric graph with Robin boundary conditions at its vertices in the sense that we allow for a communication between domains that are not necessarily physically connected. In fact, the models we analyse can be also interpreted as diffusion on a hypergraph, \cite{Be}, but we shall not pursue this line of research in this paper. For completeness, we also present similar results for transport problems, generalizing \cite{BaP} in a similar way.

To explain the idea of our extension, in the next section we consider two examples, see \cite{BF1, AB12}. 
\subsection{Motivation}
First, let us introduce basic notation which will help to formulate the problems and results.  We will work in a finite dimensional space, say, $\mbb R^m.$ The boldface characters will typically denote vectors in $\mbb R^m$, e.g. $\mb u = (u_1,\ldots,u_m).$ We denote  $\mc M = \{1,\ldots,m\}.$ Further, for any Banach space $X$, we will use the notation $\mb X = \underbrace{X\times\ldots\times X}_{m\,times}$, e.g. for $X=L_1(I), I=[0,1]$ we denote $\mb L_1(I)=\underbrace{L_1(I)\times\ldots\times L_1(I)}_{m\,times}$.

Let $(\mb A, D(\mb))$ be an operator in $\mb X$. If $\mb A$ is a generator, we will denote by \sem{\mb A} the semigroup generated by $\mb A$.

   \subsubsection {Diffusion}
                           We consider a finite graph without loops and isolated edges $\mc G = (\mc V, \mc E)$ with, say, $n$ vertices and $m$ edges.  On each edge there is a substance with density $u_j, j\in \mc M,$ which diffuses along this edge according to
                            \begin{equation}
 \p_t u_j = \sigma_j \p_{xx}u_j, \quad x\in ]0,1[, \quad j\in \mc M,
 \label{dif1}
 \end{equation}
 where $\sigma_j>0$ are constant diffusion coefficients, and can also enter the adjacent edges. To simplify considerations, each edge is identified with the unit interval $I.$
 In the model of \cite{AB12}, the particles can permeate between the edges across the vertices that join them according to a version of the Fick law.   To write down its analytical form, first we note that, since diffusion does not have a preferred direction, we can assign the tail, or the left endpoint, (that is, 0)  and  the head, or the right endpoint (that is, 1)  to the endpoints of the edge in an arbitrary way. Let $l_i$ and $r_i$ be the rates at which the substance leaves $e_i$ through, respectively, the left and the right endpoints and  $l_{ik}$ and $r_{ik}$ be the rates of at which it subsequently enters the edge $e_k$.
Then the Fick law at, respectively,  the head and the tail of $e_i,$  gives
\begin{eqnarray}
-\p_x u_{i}(1) &=& r_{i}u_{i}(1) - \sum_{j\neq i}{r_{ij}u_{j}(v)},\nn
\\
\p_x u_{i}(0) &=& l_iu_i(0) - \sum_{j\neq i}{l_{ij}u_{j}(v)},
\label{trans3}
\end{eqnarray}
 where we have written $u_j(v)$ as $v$ may be either the tail or the head of the incident edge $e_j$.
  In particular, if there are no edges incident to the tail of $e_i$, or there are no edges incident to the head of $e_i$, then the Fick's laws take the form
\begin{equation}
\p_xu_{i}(0) = l_iu_i(0), \qquad -\p_xu_{i}(1) = r_{i}u_{i}(1),
\label{source}
\end{equation}
respectively, where either coefficient on the right hand side can be 0.

It is clear that if $r_{ij}\neq 0,$ then $l_{ij} =0$ and if $l_{ij}\neq 0$ then $r_{ij} =0$ and thus we can define an $m\times m$ matrix $\mbb A$ by setting $a_{ij}=1$ if either $r_{ij}=1$ or $l_{ij} =1$ and zero otherwise. Then $\mbb A$ is the adjacency matrix of the line graph $L(\mc G)$ of $\mc G$, see e.g. \cite{Bein}. However, it turns out that such a matrix is not easy to use and we see that introducing, for any $i,j \in \mc M,$
\begin{eqnarray}
k^{00}_{ij} &=& -l_{ij} \quad \mathrm{if}\; v = 0,\qquad    k^{01}_{ij} = -l_{ij}\quad  \mathrm{if}\; v=1, \qquad k^{00}_{ii} = l_{i},\nn\\
     k^{10}_{ij} &=& r_{ij}\quad \mathrm{if}\; v = 0,\qquad k^{11}_{ij} = r_{ij} \quad \mathrm{if}\; v=1, \qquad k^{11}_{ii} = -r_{i},
      \label{ktor}
      \end{eqnarray}
      where $v$ is either the tail or the head of the edge under consideration, the problem can be written as
            \begin{eqnarray}
 \p_t \mb u(x,t) &=& \mbb D\p_{xx}\mb u(x,t), \qquad (x,t) \in ]0,1[\times \mbb R_+,\nn\\
 \p_x \mb u(0,t)&=& \mbb K^{00}\mb u(0,t) + \mbb K^{01}\mb u(1,t),\qquad t>0, \nn\\
 \p_x \mb u(1,t)&=&  \mbb K^{10} \mb u(0,t)+\mbb K^{11} \mb u(1,t), \qquad t>0,\nn\\
 \mb u(x,0) &=& \mathring{\mb u}(x), \qquad x\in ]0,1[,
 \label{system1}
\end{eqnarray}
where $\mb u = (u_1,\ldots,u_m)$,  $\mbb D =  \mathrm{diag}\{\sigma_{i}\}_{1\leq i \leq m}$ and $\mathring{\mb u}$ is the initial distribution.

 It turns out that there is no mathematical reason why the matrices $K^{\omega}$, $\omega \in \Omega = \{00, 01,10,11\}$, in (\ref{system1}) should be restricted to the matrices given by (\ref{ktor}) which, indeed, form a strictly smaller class, see \cite{BF1}. In this paper we study the  well-posedness of (\ref{system1}) for arbitrary matrices $K^{\omega}$ in spaces $\mb C(I)$ and $\mb L_1(I),$ extending and simplifying the results of \cite{AB12, AG13}. We also find necessary and sufficient conditions for the semigroup solving (\ref{system1}) to be positive.
\subsubsection{Transport problems}

We consider a digraph $G =(V(G), E(G)) = (\{v_1,\ldots, v_n\}, \{e_1,\ldots, e_m\})$ with $n$ vertices and $m$ edges. We suppose that none of the vertices is isolated. As before, each edge is normalized so as to be identified with $I$ with the head at  $1$ and the tail  at  $0$. Following \cite{BaP, BD08, NagPhysD, KS04, MS07}, we consider  a substance of density $u_j(x,t)$ on the edge $e_j$, moving with speed $c_j$ along this edge. The conservation of mass at each vertex  is expressed by the Kirchhoff law,
\begin{equation}
\sum\limits_{j =1}^{m} \phi^-_{ij}c_ju_j(0,t)= \sum\limits_{j =1}^{m} \phi^+_{ij}c_ju_j(1,t), \quad t>0, i \in 1,\ldots,n,
\label{Kirch1}
\end{equation}
where  $\Phi^{-}= (\phi_{ij}^{-})_{1\leq i\leq n, 1\leq j\leq m}$ and $\Phi^{+}=(\phi_{ij}^{+})_{1\leq i\leq n, 1\leq j\leq m}$ are, respectively, the outgoing and incoming incidence matrices; that is, matrices with the entry $\phi_{ij}^-$ (resp. $\phi_{ij}^+$) equal to 1 if there is edge $e_j$ outgoing from (res. incoming to) the vertex $v_i$, and zero otherwise. Note that due to definitions of the matrices $\Phi^-$ and $\Phi^+,$ the summation on the right hand side is over all incoming edges of the vertex $v_i$ and on the left hand side over all outgoing edges of $v_i$.

In \cite{BaP} we considered a slightly more general model
\begin{eqnarray}
&&\p_t u_{j}(x,t)+ c_j\p_xu_{j}(x,t)=0,\quad x\in(0,1),\quad t\geq 0,\nn\\
&&u_{j}(x,0) = \mathring{u}_{j}(x),\nn\\
&&\phi_{ij}^{-}\xi_jc_ju_j(0,t) = w_{ij} \sum\limits_{k=1}^{m}{\phi_{ik}^{+}(\gamma_{k}c_ku_k(1,t))},
\label{ff2}
\end{eqnarray}
where $\gamma_j>0$ and $\xi_j>0$ are the absorption/amplification coefficients at, respectively, the head and the tail of  $e_j.$ Here the matrix $\{w_{ij}\}_{1\leq i\leq n, 1\leq j\leq m}$  describes the  distribution of the incoming flow at the vertex $v_i$ into the edges outgoing from it; it is a column stochastic matrix, \cite{KS04}. We denote $\mbb C = \mathrm{diag} \{c_j\}_{1\leq j\leq m},  \Xi = \mathrm{diag} \{\xi_j\}_{1\leq j\leq m}$ and $\Gamma  = \mathrm{diag} \{\gamma_j\}_{1\leq j\leq m}.$

It follows, \cite{BaP}, that if $G$ has a sink, than there is no $C_0$-semigroup solving (\ref{ff2}). Hence we discard this case and then it can be proved, e.g. \cite[Proposition 3.1]{BD08}, that (\ref{ff2}) can be written as an abstract Cauchy problem
\begin{equation}
\mb u_t = \mb A\mb u, \qquad \mb u(0) = \mathring{\mb u},
\label{ACP1a}
\end{equation}
in $\mb X = \mb L_1(I)$, where $\mb A$ is the realization of  ${\sf A} = \mathrm{diag}\{-c_j \p_x\}_{1\leq j\leq m}$ on the domain
\begin{equation}
D(\mb A) = \{\mb u \in \mb W^{1}_1(I);\; \mb u(0) = \Xi^{-1}\mbb C^{-1} \mbb B\Gamma\mbb C\mb u(1)\},
\label{DA''}
\end{equation}
where $\mbb B$ is the (transposed) adjacency matrix of the line graph of $G$.

As with the diffusion problems, there is no mathematical reason to restrict our analysis to the matrices of the form $\Xi^{-1}\mbb C^{-1} \mbb B\Gamma\mbb C$ in the boundary conditions which, in fact, see \cite{BF1}, form a strict subset of the set of all matrices. Thus, we consider the following  generalization of (\ref{ACP1a}), (\ref{DA''}),
\begin{equation}
\mb u_t = \mb A\mb u, \qquad \mb u(0)=\mbb K \mb u(1),\qquad \mb u(0) = \mathring{\mb u},
\label{ACP1}
\end{equation}
 where $\mbb K$ is an arbitrary matrix.

\begin{example}
The main difference between (\ref{ACP1}) with arbitrary $\mbb K$ and the model with $\mbb K$ given in (\ref{DA''}) is that in the former, the exchange of the substance can occur instantaneously between any edges, while in the latter the edges must be physically connected by vertices for the exchange to take place. So, for instance, if there is a connection $e_1\to e_2$ and $e_2\to e_3$, then there is no connection $e_1\to e_3$.  So, while in general (\ref{ACP1}) cannot model a flow in a physical network, it can describe e.g. a mutation process. Indeed, let  a population of cells be divided into $m$ subpopulations, with $\mb v(x) =(v_1(x),\ldots, v_m(x)),$ where $v_j(x),  j\in \mc M, x\in [0,1],$ is the density of cells of age $x$ whose genotype belongs to a  class $j$ (for instance, having $j$ copies of a gene of a particular type). We assume that cells of class $j$ mature and divide upon reaching maturity at $x=1,$ with offspring, due to mutations,  appearing in class $i$ with probability $k_{ij}$, $i\in \mc M$. In such a case $(k_{ij})_{1\leq i,j\leq m}$ is a column stochastic matrix. We note that a particular case of this model is the discrete Rotenberg-Rubinov-Lebowitz model, \cite{rot}, where the cells are divided into classes according to their maturation velocity.

A similar interpretation can be given to (\ref{system1}) where the variable $x$, instead of the age, denotes the size of the organism, e.g. \cite{Had}.

Moreover, problems of the form (\ref{ACP1}) arise e.g. in  queuing theory, \cite{da}.

\end{example}

\section{Well-posedness of the diffusion problem}

We shall consider solvability of (\ref{system1})  in $\mb X=\mb C(I)$ and $\mb X= \mb L_1(I).$ For technical reasons, we also shall need the solvability of (\ref{system1}) in $\mb W^1_1(I)$.  The norms in these spaces will be denoted, respectively, by $\|\cdot\|_\infty, \|\cdot\|_0, \|\cdot\|_1$. However, if it does not lead to any misunderstanding, we will use $\mb X$ to denote any of these spaces and $\|\cdot\|$ or $\|\cdot\|_\mb X$ to denote the norm in $\mb X$. Similarly, by $|||\cdot|||_{\mb X}$ we denote the operator norm in the space of bounded linear operators from $\mb X$ to $\mb X$.

Our results are based on \cite{Gre} and thus, introducing relevant spaces and operators, we try to keep notation consistent with \textit{op.cit.} First, consider
\begin{equation}
\mb X \ni \mb u \to L\mb u = (\gamma_0 \p_x\mb u, \gamma_1 \p_x \mb u) \in \mb Y = \mbb R^m\times \mbb R^m,
\label{Phi}
\end{equation}
where $\gamma_i, i=0,1,$ is the trace operator at $x=i$ (taking the value at $x=i$ if $\mbb X = \mb C(I)$). The domains of $L$ are $D(L) = \mb C^1(I)$ if $\mb X =\mb C(I)$ and $D(L) = \mb W^2_1(I)$ in two other cases. Then we define the operator
$$
\mb X\ni \mb u \to \Phi\mb u = \mbb K(\gamma_0 \mb u, \gamma_1\mb u) = \left(\begin{array}{cc}\mbb K^{00}& \mbb K^{01}\\\mbb K^{10}& \mbb K^{11}\end{array}\right)\left(\begin{array}{c}\gamma_0 \mb u\\\gamma_1 \mb u\end{array}\right) \in \mbb R^m\times \mbb R^m.
$$
Further, let us denote
\begin{equation}
\Phi^*\mb u = \mbb K^*(\gamma_0 \mb u, \gamma_1\mb u) = \left(\begin{array}{cc}\mbb K^{00\,T}& -\mbb K^{10\,T}\\-\mbb K^{01\,T}& \mbb K^{11\,T}\end{array}\right)\left(\begin{array}{c}\gamma_0 \mb u\\\gamma_1 \mb u\end{array}\right),
\label{phistar}
\end{equation}
where $\mbb K^T$ denotes the transpose of $\mbb K$. Clearly $(\Phi^*)^* = \Phi$.

Let $\mathsf A$ denote the differential expression $\mathsf A\mb u := \mbb D\p_{xx}\mb u.$ Then we define the operators $\mb A^\alpha_\Phi$, $\alpha =\infty, 0, 1$ by the restriction of $\mathsf A$ to the domains
\begin{eqnarray}
D(\mb A^\infty_\Phi) &=& \{\mb u \in \mb C^2(I);\;L\mb u = \Phi \mb u\},\nn\\
D(\mb A^0_\Phi) &=& \{\mb u \in \mb W^2_1(I);\;L\mb u = \Phi \mb u\},\nn\\
D(\mb A^1_\Phi) &=& \{\mb u \in \mb W^3_1(I);\;L\mb u = \Phi \mb u\},\label{real}
\end{eqnarray}
respectively. As before, we drop the indices from the notation if it will not lead to any misunderstanding.

\subsection{Basic estimates in the scalar case}
Consider the general resolvent equation for (\ref{system1})
\begin{equation}
\la \mb u - \mbb D\p_{xx} \mb u = \mb f, \quad x \in ]0,1[.
\label{reseq1}
\end{equation}
Since without the boundary conditions the system is uncoupled, its solution $\mb u$ is given by
\begin{equation}
\mb u(x) = \mbb E(-\bmu x) \mb C_1+ \mbb E(\bmu x)\mb C_2 + \mb U_{\bmi}(x)
\label{gensol1}
\end{equation}
where $\mbb E(\pm\bmu x) = \mathrm{diag}\{e^{\pm \mu_ix}\}_{1\leq i\leq m},$ $0\neq \la/\sigma_i = \mu_i^2 = |\la/\sigma_i|e^{i\theta}$ with $\Re \mu_i >0,$ $\mb U_{\bmi}(x) = (U_{\mu_1}(x),\ldots, U_{\mu_m}(x))$ with
\begin{equation}
 U_{\mu_i}(x) = \frac{1}{2\mu_i\sigma_i} \cl{0}{1} e^{-\mu_i|x-s|}f_i(s)ds,
\label{Umu}
\end{equation}
and the vector constants $\mb C_1$ and $\mb C_2$ are determined by the boundary conditions. Further, denote
$$
\Sigma_{\alpha,\theta_0}= \{\la = |\la|e^{i\theta}\in \mbb C;\;|\la|\geq \alpha, |\theta|\leq \theta_0 <\pi\}$$
and $ \Sigma_{0, \theta_0} = \Sigma_{\theta_0}.$

The starting point are well-known estimates for the scalar Dirichlet and Neumann problems, e.g. \cite{EN}. We briefly recall them here in a slightly more precise form, similarly to \cite{BobAPM}.  In the scalar case we can use $\sigma =1$ as will become clear when the estimate is derived.  It follows that for the Dirichlet problem we have
\begin{equation}
C_1^D = \frac{U_\mu(1)-U_\mu(0)e^{\mu}}{e^\mu -e^{-\mu}}\qquad C_2^D = \frac{U_\mu(0)e^{-\mu}-U_\mu(1)}{e^\mu -e^{-\mu}}
\label{CD}
\end{equation}
and for the Neumann problem
$$
C_1^N = \frac{U_\mu(0)e^\mu +U_\mu(1)}{e^\mu -e^{-\mu}}\qquad C_2^N = \frac{U_\mu(1)+U_\mu(0)e^{-\mu}}{e^\mu -e^{-\mu}}.
$$
Further, \cite{EN},
\begin{equation}
\|U_\mu\|_\infty \leq \frac{\|f\|_\infty}{|\mu|\Re \mu} \leq \frac{\|f\|_\infty}{|\la|\cos \theta/2}, \qquad
\|U_\mu\|_0 \leq \frac{\|f\|_0}{|\mu|\Re\mu} \leq \frac{\|f\|_0}{|\la|\cos \theta/2}
\label{Uest0}
\end{equation}
and, for $ i=0,1$,
\begin{equation}
|U_\mu(i)| \leq \frac{\|f\|_\infty(1-e^{-\Re\mu})}{2|\mu|\Re \mu},\quad |U_\mu(i)| \leq \frac{\|f\|_0}{2|\mu|}.
\label{U}
\end{equation}
The following result plays an essential role in deriving precise estimates,
\begin{equation}
\left |\frac{1}{e^\mu-e^{-\mu}}\right| = e^{-\Re \mu}\left|\sum\limits_{n=0}^{\infty}e^{-2n\mu}\right| \leq
e^{-\Re \mu}\sum\limits_{n=0}^{\infty}e^{-2n\Re \mu} = \frac{e^{-\Re \mu}}{1-e^{-2\Re\mu}}.
\label{estemu}
\end{equation}
Then, for either Dirichlet or Neumann problem in $C(I)$, for $\lambda\in \Sigma_{\theta_0}$ for any $\theta_0<\pi$, we have for $\omega = D,N$
\begin{eqnarray}
\|u^\omega\|_\infty &\leq & |C^\omega_1| +|C^\omega_2| e^{\Re \mu} + \frac{\|f\|_\infty}{|\la|\cos \theta_0/2}\label{uomegainf}\\
& \leq& \frac{\|f\|_\infty}{|\la|\cos \theta_0/2}\left( \frac{1}{1-e^{-2\Re \mu}} \left(\frac{1-e^{-2\Re\mu}}{2} + \frac{1-e^{-2\Re\mu}}{2}\right) + 1\right)\nn\\ &\leq&  \frac{2\|f\|_0}{|\la|\cos \theta_0/2}.\nn
\end{eqnarray}
Similarly, in $L_1(I)$, we have
\begin{eqnarray}
\|u^\omega\|_0 &\leq & |C^\omega_1|\frac{1-e^{-\Re\mu}}{\Re\mu} +|C^\omega_2| \frac{e^{\Re \mu}-1}{\Re\mu} + \frac{\|f\|_0}{|\la|\cos \theta_0/2}\label{uomegajed}\\
& \leq& \frac{\|f\|_0}{|\la|\cos \theta_0/2}\left( \frac{1}{1-e^{-2\Re \mu}} \left(\frac{1-e^{-2\Re\mu}}{2} + \frac{1-e^{-2\Re\mu}}{2}\right) + 1\right)\nn\\
 &\leq&  \frac{2\|f\|_0}{|\la|\cos \theta_0/2}.\nn
\end{eqnarray}
    Consider now $A^1_0,$ see (\ref{real}); that is, the operator corresponding to the Neumann boundary conditions in one dimension. We have
\begin{prop}
$A^1_0$ generates an analytic semigroup in $W^1_1(I)$ with the resolvent satisfying the estimate
\begin{equation}
\|R(\la, A^1_0)f\|_{1} \leq \frac{2\|f\|_{1}}{|\la|\cos \theta_0/2}, \qquad \la \in \Sigma_{\theta_0}.
\label{resw11}
\end{equation}
\end{prop}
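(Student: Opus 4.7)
The plan is to reduce the $W^1_1$ resolvent estimate to the scalar $L_1$ Neumann and Dirichlet estimates already derived in (\ref{uomegajed}) by exploiting a simple differentiation trick. For $\la \in \Sigma_{\theta_0}$ and $f \in W^1_1(I)$, let $u$ be the unique classical solution of $\la u - u'' = f$ with Neumann conditions $u'(0)=u'(1)=0$, given by (\ref{gensol1}) with $m=1$ and the Neumann constants $C_1^N,C_2^N$. Since $u'' = \la u - f$ and $f \in W^1_1(I)$, bootstrap regularity immediately gives $u \in W^3_1(I)$, so in fact $u \in D(A^1_0)$ and the map $f \mapsto u = R(\la,A^1_0)f$ is well defined and surjective onto $W^1_1(I)$.

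Now set $v := u'$. Then $v \in W^2_1(I)$, the Neumann conditions on $u$ turn into Dirichlet conditions $v(0) = v(1) = 0$, and differentiating the equation gives
\[
\la v - v'' = f' \quad \text{in } L_1(I),
\]
so $v$ solves the scalar Dirichlet resolvent problem with right-hand side $f' \in L_1(I)$. Applying the $L_1$-Neumann estimate in (\ref{uomegajed}) to $u$ and the $L_1$-Dirichlet estimate in (\ref{uomegajed}) to $v$ yields
\[
\|u\|_0 \leq \frac{2\|f\|_0}{|\la|\cos\theta_0/2}, \qquad \|u'\|_0 = \|v\|_0 \leq \frac{2\|f'\|_0}{|\la|\cos\theta_0/2}.
\]
Adding these and using the identifications $\|u\|_1 = \|u\|_0 + \|u'\|_0$ and $\|f\|_1 = \|f\|_0 + \|f'\|_0$ yields exactly (\ref{resw11}).

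Since the estimate holds on $\Sigma_{\theta_0}$ for every $\theta_0 < \pi$, in particular for some $\theta_0 > \pi/2$, the operator $A^1_0$ is sectorial and hence generates a bounded analytic semigroup on $W^1_1(I)$ by the standard sectoriality criterion (see e.g.\ \cite{EN}). The only even mildly subtle point in this argument is the clean interaction of boundary conditions with differentiation — Neumann for $u$ becomes Dirichlet for $u'$ — which is exactly what allows the $W^1_1$ estimate to be packaged as two separate $L_1$ estimates, with no new PDE analysis required.
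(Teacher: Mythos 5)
Your argument is exactly the one in the paper: differentiate the Neumann resolvent equation so that $v=u'$ solves the Dirichlet problem with data $f'$, apply the $L_1$ estimate (\ref{uomegajed}) to $u$ and to $v$ separately, and add to get the $W^1_1$ bound; sectoriality then gives the analytic semigroup. The proposal is correct and matches the paper's proof.
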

\begin{proof}
Consider the resolvent equation
$$\la u -  \p_{xx}u = f, \quad \p_x u(0)=\p_x u(1) = 0.
$$
If $f \in W^1_1(I)$, then $u\in W^3_1(I)$ and we can differentiate the differential equation getting, for $v:=\p_xu$,
\begin{equation}
\la v - \p_{xx} v = \p_x f, \quad v(0)=v(1) =0;
\label{dirv}
\end{equation}
that is, $v$ satisfies the resolvent equation for the Dirichlet problem. Hence
$$
\|u\|_{1} = \|u\|_0 + \|\p_xu\|_0 \leq \frac{2\|f\|_0}{|\la|\cos \theta_0/2} + \frac{2\|\p_xf\|_0}{|\la|\cos \theta_0/2} = \frac{2\|f\|_{1}}{|\la|\cos \theta_0/2}.
$$
\end{proof}
Since $\la u -  \sigma\p_{xx}u = f$ is equivalent to $\sigma^{-1}\la u -  \p_{xx}u = \sigma^{-1}f$, we see that the estimates above are independent of $\sigma$.

\subsection{Solvability of (\ref{system1})}

The ideas in this section are based on \cite{AB12, AG13} but the analysis is simplified
by using the analyticity of the semigroup and the application of \cite[Theorem 2.4]{Gre}.

Since in the vector case and Neumann boundary conditions the resolvent equation decouples, we obtain
   \begin{equation}
\|R(\la, \mb A_0)f\|_{\mb X} \leq \frac{2\|f\|_{\mb X}}{|\la|\cos \theta_0/2}, \qquad \la \in \Sigma_{\theta_0}
\label{resw11}
\end{equation}
for any $\theta_0<\pi$ and $\mb X=\mb C(I), \mb L_1(I), \mb W^1_1(I)$. Hence, in particular $\mb A_0$ generates an analytic semigroup in $\mb X$.

 We begin with a straightforward consequence of Ref.~\cite{Gre}.
\begin{thm}
Let $\mb X$ be either $\mb C(I),$ or $\mb W^1_1(I)$. Then the operator $\mb A_\Phi$ generates an analytic semigroup in $\mb X$ with the resolvent satisfying the estimate
\begin{equation}
|||R(\la, \mb A_\Phi)|||_{\mb X} \leq  2|||R(\la, \mb A_0)|||_{\mb X}, \qquad \la \in  \Sigma_{\alpha, \theta_0}
\label{grei0}
\end{equation}
for some $\alpha\geq 0$.\label{th42}
\end{thm}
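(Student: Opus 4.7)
The plan is to cast (\ref{system1}) as a boundary perturbation of the Neumann realization $\mb A_0$ and apply \cite[Theorem 2.4]{Gre}. Let $\mb A_m$ denote the realization of $\mathsf A = \mbb D\p_{xx}$ on the unconstrained ``maximal'' domain ($\mb C^2(I)$, respectively $\mb W^3_1(I)$), so that $\mb A_\Phi = \mb A_m|_{\ker(L-\Phi)}$ and $\mb A_0 = \mb A_m|_{\ker L}$. Both $L$ and $\Phi$ are bounded maps from $D(\mb A_m)$ into $\mbb R^{2m}$, and $\mb A_0$ is already sectorial by (\ref{resw11}); Greiner's machinery then reduces everything to controlling a single auxiliary operator, the right inverse $D_\la:\mbb R^{2m}\to\mb X$ of $L|_{\ker(\la-\mb A_m)}$ for $\la$ in a suitable sector.

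Explicitly, $D_\la(\mb a,\mb b)$ is the unique solution of $\la\mb u - \mbb D\p_{xx}\mb u = 0$ with $\p_x\mb u(0)=\mb a$ and $\p_x\mb u(1)=\mb b$. The ansatz $\mb u(x)=\mbb E(-\bmu x)\mb C_1+\mbb E(\bmu x)\mb C_2$ reduces the boundary conditions to a $2\times 2$ block linear system for $(\mb C_1,\mb C_2)$ whose right-hand side carries an extra $\bmu^{-1}$ coming from differentiating the exponentials. The very cancellation (\ref{estemu}) used in (\ref{uomegainf})--(\ref{uomegajed}) then yields, for both $\mb X=\mb C(I)$ and $\mb X=\mb W^1_1(I)$,
\bdm
|||D_\la|||_{\mbb R^{2m}\to\mb X}\;\leq\;\frac{C}{|\mu|}\;\leq\;\frac{C'}{\sqrt{|\la|}},\qquad \la\in\Sigma_{\theta_0};
\edm
in the Sobolev case the extra $\|\p_x\mb u\|_0$ contribution is handled by the elementary $L_1$-bounds on $e^{\pm\mu x}$, which supply the factor $(\Re\mu)^{-1}$ needed to recover the same rate.

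Because the traces $\gamma_0,\gamma_1$ are bounded on $\mb C(I)$ trivially and on $\mb W^1_1(I)$ via the embedding $\mb W^1_1\hookrightarrow\mb C$, the perturbation $\Phi$ extends to a bounded operator $\mb X\to\mbb R^{2m}$, giving $|||\Phi D_\la|||\leq C''/\sqrt{|\la|}$. Choosing $\alpha\geq 0$ so large that this quantity is at most $1/2$ on $\Sigma_{\alpha,\theta_0}$, the Neumann series for $(I-\Phi D_\la)^{-1}$ converges with norm at most $2$, and Greiner's identity
\bdm
R(\la,\mb A_\Phi) = R(\la,\mb A_0) + D_\la(I-\Phi D_\la)^{-1}\Phi R(\la,\mb A_0)
\edm
is well defined. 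An analogous bound using (\ref{U}) gives $|||\Phi R(\la,\mb A_0)|||\leq C/|\la|$, so the second summand is $O(|\la|^{-3/2})$ and, after possibly enlarging $\alpha$, is absorbed into $|||R(\la,\mb A_0)|||$, yielding (\ref{grei0}); combined with (\ref{resw11}) this is exactly the sectorial resolvent estimate characterising an analytic generator.

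The principal technical obstacle is the bookkeeping cancellation of the a priori growing $\mbb E(\bmu x)$ against the factor $e^{-\Re\mu}/(1-e^{-2\Re\mu})$ supplied by $(\mbb E(\bmu)-\mbb E(-\bmu))^{-1}$, but this is exactly what (\ref{estemu}) delivers, so no genuinely new analytic ingredient is required. The exclusion of $\mb X=\mb L_1(I)$ from this statement reflects the unboundedness there of the trace maps defining $\Phi$, which is the one place where the argument above would fail and which will therefore have to be addressed separately in the sequel.
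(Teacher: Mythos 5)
Your argument is correct and follows essentially the same route as the paper: the paper simply verifies the hypotheses of Greiner's Theorem 2.4 (sectoriality of $\mb A_0$, surjectivity of $L$ with an explicit right inverse, boundedness of $\Phi$ on $\mb C(I)$ and on $\mb W^1_1(I)$ via absolute continuity) and cites the result, whereas you unpack that theorem's proof explicitly via the operator $D_\la$, the bound $|||D_\la|||=O(|\la|^{-1/2})$, and the Neumann series for $(I-\Phi D_\la)^{-1}$. The only point worth making explicit in your absorption step is the lower bound $|||R(\la,\mb A_0)|||\geq c/|\la|$ (which holds since $0$ lies in the spectrum of the Neumann realization), so that the $O(|\la|^{-3/2})$ remainder is indeed dominated by $|||R(\la,\mb A_0)|||$ for large $|\la|$.
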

\begin{proof}
The result follows directly from \cite[Theorem 2.4]{Gre}, since $\mb A_0$ generates an analytic semigroup in $\mb X$, $L$ is an unbounded operator on $\mb X$ which is, however, bounded as an operator from $D(\mb A)$ to $\mb Y$, and it is a surjection (the right inverse in each case is given by $[L^{-1}_r(\mb y_0,\mb y_1)](x) = \frac{1}{2}(\mb y_1- \mb y_0)x^2 -\mb y_0 x.$  Furthermore,  $\Phi$ is a bounded operator on $\mb X$ (if $\mb X = \mb W^1_1(I)$  this follows since $\mb W^1_1(I)$ functions are absolutely continuous on $I$). Hence, the assumptions  \cite[(1.13)]{Gre} are satisfied and the theorem follows from \cite[Theorem 2.4]{Gre}.
\end{proof}
In $\mb X = \mb L_1(I)$ the situation is more complicated as $\Phi$ is not bounded on $\mb X$. First we show that $R(\la, \mb A^1_\Phi)$ extends to a resolvent on $\mb L_1(I)$.
\begin{lem} We have
$$
\overline{\mb A^1_\Phi}^{\mb L_1(I)} = \mb A^0_\Phi
$$
and the resolvent set of $\mb A^0_\Phi$ satisfies $\rho (\mb A^0_\Phi)\cap \mbb R_+ \neq \emptyset$.
\label{lemres1}
\end{lem}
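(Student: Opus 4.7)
The plan is to first verify that $\rho(\mb A^0_\Phi)\cap\mbb R_+\neq\emptyset$ by explicitly solving the resolvent equation (this already forces closedness of $\mb A^0_\Phi$, since a densely defined operator with nonempty resolvent set is automatically closed), and then to carry out the density argument in the graph norm using this resolvent.

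For the resolvent analysis, I would substitute the general solution (\ref{gensol1}) into the boundary conditions $L\mb u=\Phi\mb u$. This gives a $2m\times 2m$ linear system for $(\mb C_1,\mb C_2)$; after the symmetrizing change $\tilde{\mb C}_2=\mbb E(\bmu)\mb C_2$, so that $\mb u(x)=\mbb E(-\bmu x)\mb C_1+\mbb E(-\bmu(1-x))\tilde{\mb C}_2+\mb U_{\bmi}(x)$, the coefficient matrix takes the form $\bmu\cdot\bigl(I+O(|\mu|^{-1})+O(e^{-\Re\mu})\bigr)$ and is therefore invertible for $\la>0$ sufficiently large, with inverse bounded by $C/|\mu|$. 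The right-hand side of the system is controlled using (\ref{U}) for the terms $\mbb K^{i\alpha}\mb U_{\bmi}(\alpha)$ and using the explicit integral representation of $\p_x U_{\mu_j}(i)=\pm(2\sigma_j)^{-1}\int_0^1 e^{-\mu_j|i-s|}f_j(s)\,ds$ (bounded by $C\|\mb f\|_0$) for the derivative traces; multiplication on the left by $\bmu^{-1}$ turns this into an $O(\|\mb f\|_0/|\mu|)$ bound, so that $|\mb C_1|,|\tilde{\mb C}_2|=O(\|\mb f\|_0/|\mu|)$. Combining $\|\mbb E(-\bmu\,\cdot)\mb C_1\|_0\leq |\mb C_1|/\Re\mu$ (and the analogous bound for $\tilde{\mb C}_2$) with (\ref{Uest0}) yields $\|\mb u\|_0\leq C\|\mb f\|_0/|\la|$, proving $\la\in\rho(\mb A^0_\Phi)$. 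Injectivity comes from the same computation applied to $\mb f=0$, forcing $\mb C_1=\tilde{\mb C}_2=0$.

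For the closure identity, $\overline{\mb A^1_\Phi}^{\mb L_1}\subset\mb A^0_\Phi$ is immediate from $\mb A^1_\Phi\subset\mb A^0_\Phi$ and closedness of the latter. Conversely, fix $\la_0\in\rho(\mb A^0_\Phi)\cap\mbb R_+$, take $\mb u\in D(\mb A^0_\Phi)$ and set $\mb f=(\la_0-\mathsf A)\mb u\in\mb L_1$. Approximate $\mb f$ in $\mb L_1$ by $\mb f_n\in\mb W^1_1$ (density of $\mb W^1_1$ in $\mb L_1$) and set $\mb u_n=R(\la_0,\mb A^0_\Phi)\mb f_n$. From the explicit formula, $\mb U_{\bmi}$ gains one derivative of regularity over its source (because $\mbb D\p_{xx}\mb U_{\bmi}=\bmu^2\mbb D\mb U_{\bmi}-\mb f_n$), so $\mb u_n\in\mb W^3_1(I)$; since $\mb u_n$ satisfies $L\mb u_n=\Phi\mb u_n$ by construction, $\mb u_n\in D(\mb A^1_\Phi)$. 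Boundedness of the resolvent gives $\mb u_n\to\mb u$ in $\mb L_1$, and $\mb A^1_\Phi\mb u_n=\la_0\mb u_n-\mb f_n\to\la_0\mb u-\mb f=\mb A^0_\Phi\mb u$, which places $\mb u$ in $D(\overline{\mb A^1_\Phi}^{\mb L_1})$.

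The main obstacle is the resolvent estimate in $\mb L_1$, because $\Phi$ is unbounded there (traces do not extend to $\mb L_1$) and so the abstract perturbation result \cite[Theorem 2.4]{Gre} used to prove Theorem \ref{th42} is unavailable. Working with the explicit solution formula, together with the careful tracking of the exponential factors $\mbb E(\pm\bmu x)$ via (\ref{estemu}) and the sharper pointwise bounds (\ref{U}) on $\mb U_{\bmi}$ at the endpoints, is what circumvents this difficulty for real $\la$ large enough.
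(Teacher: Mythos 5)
Your proof is correct, but it takes a genuinely different route from the paper's in both halves of the statement. For the nonemptiness of $\rho(\mb A^0_\Phi)\cap\mbb R_+$ the paper argues softly: the resolvent problem reduces to the $2m\times 2m$ system (\ref{resfor}), whose determinant is an entire function of $\mu$ and therefore has only isolated zeros, whence some positive $\la$ must lie in the resolvent set. You instead prove the quantitative statement that \emph{all} sufficiently large real $\la$ work, together with the bound $\|R(\la,\mb A^0_\Phi)\mb f\|_0\leq C\|\mb f\|_0/|\la|$; this costs more computation but also supplies details the entire-function argument leaves tacit (that the determinant is not identically zero, and that the resulting solution operator is bounded on $\mb L_1(I)$), and it anticipates the sectoriality that the paper only obtains later, in Theorem \ref{thm26}, by duality with the $\mb C(I)$ problem. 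For the closure identity the paper is constructive: the inclusion $\subset$ is proved by observing that graph-norm convergence forces $\mb W^2_1(I)$ convergence and that the traces of $\mb u$ and $\p_x\mb u$ are continuous on $\mb W^2_1(I)$, while $\supset$ is obtained by subtracting an explicit cubic polynomial matching the boundary data and approximating the remainder by $\mb C^\infty_0$ functions in $\mb W^2_1(I)$. You get $\subset$ essentially for free from the closedness of an operator with nonempty resolvent set, and $\supset$ by applying $R(\la_0,\mb A^0_\Phi)$ to $\mb W^1_1$ approximations of $(\la_0-\mathsf{A})\mb u$ together with the elliptic bootstrap $\p_{xx}\mb u_n=\mbb D^{-1}(\la_0\mb u_n-\mb f_n)\in\mb W^1_1(I)$. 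Both arguments are sound; yours trades the paper's elementary density construction for a dependence on the resolvent analysis, which you have correctly placed first so that no circularity arises, and the two halves of your argument reinforce each other in a way the paper's independent steps do not.
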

\begin{proof}
Let $D(\mb A^1_\Phi)\ni \mb u_n \to \mb u \in \mb L_1(I)$ and $\mb A_\Phi^0 \mb u_n = \mb A \mb u_n \to \mb v \in \mb L_1(I)$. This shows that
$\mb u_n\to u$ in $\mb W^2_1(I)$ and thus $\mb v = \mb A\mb u$. Since taking the trace of a $\mb W^2_1(I)$ function and of its derivative is continuous in $\mb W^2_1(I)$, the boundary values of $\mb u_n$ and $\p_x\mb u_n$ are preserved in the limit and thus $\mb u \in D(\mb A^0_\Phi)$. Hence
$$
\overline{\mb A^1_\Phi}^{\mb L_1(I)} \subset \mb A^0_\Phi.
$$
On the other hand, let $\mb u \in D(\mb A^0_\Phi)$. Then $\mb v =\mb u -\mb f \in \stackrel{0}{W}\phantom{W}\!\!\!\!\!\!\!\!^2_1(I)$, where
\begin{eqnarray*}
\mb f(x) &=& (\mb u(0) + \mb u'(0) -2\mb u(1) +\mb u'(1))x^3\\
 &&-(2\mb u(0) + 2\mb u'(0) -3\mb u(1) +\mb u'(1))x^2 + \mb u'(0)x + \mb u(0) \in D(\mb A^0_\Phi).
\end{eqnarray*}
Thus there is a sequence $\seq{\mb h}{n} \subset \mb C^\infty_0(I)$ converging to $\mb v$  in $\mb W^2_1(I)$ and hence $\mb u$ is the limit in $\mb W^2_1(I)$ of functions $\mb u_n = \mb h_n + \mb f \in D(\mb A^1_\Phi)$. Since the convergence in $\mb W^2_1(I)$ implies the convergence of both $\mb u_n$ and $\mb A\mb u_n$ in $\mb L_1(I)$ we see that also
$$
\overline{\mb A^1_\Phi}^{\mb L_1(I)} \supset \mb A^0_\Phi.
$$
Finally, we see that, as in the scalar case (\ref{res1}), the solvability of the resolvent problem for (\ref{system1})
is equivalent to solvability of the linear system
\begin{eqnarray}
-\mbb M \mb C_1 + \mbb M \mb C_2 &=& \mbb K^{00}(\mb C_1 + \mb C_2) +\mbb  K^{01}(e^{-\mu} \mb C_1 + e^\mu \mb C_2)\nn\\
 &&-\mbb M \mb U_{\bmi} (0) + \mbb K^{00} \mb U_{\bmi}(0)+\mbb  K^{01} \mb U_{\bmi}(1),\nn\\
-\mbb M \mbb E(-\bmu)\mb C_1 + \mbb M \mbb E(\bmu) \mb C_2 &=& \mbb K^{10}(\mb C_1 + \mb C_2) + \mbb K^{11}(\mbb E(-\bmu) \mb C_1 + \mbb E(\bmu) \mb C_2)\nn\\&& +\mbb M \mb U_{\bmi} (1) +\mbb  K^{01} \mb U_{\bmi}(0)+\mbb  K^{11} \mb U_{\bmi}(1),\label{resfor}
\end{eqnarray}
for the vectors $\mb C_1, \mb C_2$, where $\mbb M=\mathrm{diag}\{\bmu\}$. Once the constants are found, the solution is given by the vector version of (\ref{gensol1}) and thus belongs to $\mb W^2_1(I)\subset \mb L_1(I)$. The system above is solvable provided its determinant is different from zero. However, the determinant is clearly an entire function in $\mu$ and thus can have only isolated zeros in $\mbb C$. Hence,  there must be positive values of $\la \in \rho (\mb A^0_\Phi)$.
\end{proof}

\begin{thm}
The operator $\mb A^0_\Phi$ generates an analytic semigroup in $\mb L_1(I).$ \label{thm26}
\end{thm}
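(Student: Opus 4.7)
The plan is to establish the sectorial resolvent bound $|||R(\la,\mb A^0_\Phi)|||_{\mb L_1(I)} \leq M/|\la|$ for $\la \in \Sigma_{\alpha,\theta_0}$ with some $\alpha \geq 0$ and any $\theta_0 < \pi$. Once this is in hand, Lemma \ref{lemres1} supplies closedness of $\mb A^0_\Phi$, density of its domain in $\mb L_1(I)$ (since $\mb W^3_1(I)$ is dense there), and nonemptiness of its resolvent set, so the standard sectorial characterization yields an analytic semigroup on $\mb L_1(I)$.

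By Lemma \ref{lemres1}, for $\mb f \in \mb W^1_1(I)$ the element $\mb u = R(\la,\mb A^1_\Phi)\mb f$ lies in $D(\mb A^1_\Phi) \subset D(\mb A^0_\Phi)$ and admits the explicit representation (\ref{gensol1}) inherited from the scalar decomposition. Since $\mb W^1_1(I)$ is dense in $\mb L_1(I)$, it suffices to prove $\|R(\la,\mb A^1_\Phi)\mb f\|_0 \leq M\|\mb f\|_0/|\la|$ uniformly in such $\mb f$; the unique bounded extension to $\mb L_1(I)$ then coincides with $R(\la,\mb A^0_\Phi)$ by a standard closure argument. The particular part $\mb U_{\bmi}$ of the solution already satisfies the desired $\mb L_1$ bound through the scalar estimate (\ref{Uest0}), and its traces at $x=0,1$ are of order $\|\mb f\|_0/|\mu_i|$ by (\ref{U}).

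The principal obstacle, and the technical heart of the argument, is uniform invertibility of the coefficient matrix of the linear system (\ref{resfor}) for $\mb C_1,\mb C_2$ on the sector $\Sigma_{\alpha,\theta_0}$, together with the sharp bounds $|\mb C_k| \leq C\|\mb f\|_0/|\mu|$. I would pre-multiply the second row-block by $\mbb E(-\bmu)$ to expose a dominant $\mathrm{diag}(\mbb M,\mbb M)$, writing the system as $(\mathrm{diag}(\mbb M,\mbb M) + R_\la)(\mb C_1,\mb C_2)^T = \mathrm{RHS}$, where $R_\la$ gathers the $\mbb K^\omega$ blocks together with exponentially decaying terms governed by (\ref{estemu}). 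Since $|\mu_i| \to \infty$ as $|\la| \to \infty$ inside $\Sigma_{\alpha,\theta_0}$, choosing $\alpha$ large enough renders $\mathrm{diag}(\mbb M,\mbb M)$ strictly dominant, and Neumann series inversion yields the desired bounds on $\mb C_1,\mb C_2$.

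With these bounds in place, the $\mb L_1$-norm of the correction term $\mbb E(-\bmu x)\mb C_1 + \mbb E(\bmu x)\mb C_2$ is estimated component-wise via $\int_0^1 |e^{\pm\mu_i x}|\,dx \leq (e^{\Re\mu_i}-1)/\Re\mu_i$, paralleling the scalar computation leading to (\ref{uomegajed}). Combining this with the bound on $\mb U_{\bmi}$ produces the sectorial estimate $\|\mb u\|_0 \leq M\|\mb f\|_0/|\la|$ on $\Sigma_{\alpha,\theta_0}$, which closes the argument. I expect the most delicate point to be bookkeeping the $\mu$-dependence carefully enough so that the cancellations between the $1/|\mu|$ factors in the coefficients $\mb C_k$ and the $1/|\mu|$ factors from integrating the exponentials combine correctly to $1/|\la|$ rather than a weaker bound.
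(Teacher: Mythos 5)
Your overall strategy---proving the sectorial bound $\|R(\la,\mb A^0_\Phi)\mb f\|_0\le M\|\mb f\|_0/|\la|$ directly from the explicit solution formula and then invoking Lemma \ref{lemres1} to identify the extension with the resolvent of $\mb A^0_\Phi=\overline{\mb A^1_\Phi}$---is a legitimate and genuinely different route from the paper's. The paper does not estimate (\ref{resfor}) directly in $\mb L_1(I)$ at all: it first uses Greiner's factorization $R(\la,\mb A^1_\Phi)=(I-L_\la\Phi)^{-1}R(\la,\mb A^1_N)$ to extend the resolvent to $\mb L_1(I)$ and identify it with $R(\la,\mb A^0_\Phi)$ via a pseudoresolvent argument, and then obtains sectoriality by duality: the adjoint of $R(\la,\mb A^\infty_{\Phi^*})$ (the $\mb C(I)$ operator with the transposed boundary matrix $\mbb K^*$ of (\ref{phistar})) acts on measures and restricts to $R(\bar\la,\mb A^0_\Phi)$ on absolutely continuous ones, giving $|||R(\la,\mb A^0_\Phi)|||_{\mb L_1(I)}=|||R(\la,\mb A^\infty_{\Phi^*})|||_{\mb C(I)}$, after which Theorem \ref{th42} finishes the job. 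The duality argument buys you freedom from all the $\mu$-bookkeeping; your direct computation, if completed, is more self-contained and does not require introducing $\Phi^*$.

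There is, however, a concrete flaw in the linear-algebra step as you describe it. Pre-multiplying the second row-block of (\ref{resfor}) by $\mbb E(-\bmu)$ is a row operation, so it does nothing to the term $\mbb K^{01}\mbb E(\bmu)\mb C_2$ sitting in the \emph{first} row, which carries an exponentially large coefficient $e^{\mu_j}$; moreover it turns $\mbb K^{11}\mbb E(\bmu)\mb C_2$ into $\mbb E(-\bmu)\mbb K^{11}\mbb E(\bmu)\mb C_2$, whose off-diagonal entries are $e^{\mu_j-\mu_i}k^{11}_{ij}$. Since $\mu_i=\sqrt{\la/\sigma_i}$ and the diffusion coefficients $\sigma_i$ need not be equal, $\Re(\mu_j-\mu_i)$ can grow like $c\sqrt{|\la|}$, so these entries are \emph{not} part of a small perturbation $R_\la$ and the Neumann-series inversion breaks down. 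The correct normalization is the column substitution $\mb C_2=\mbb E(-\bmu)\tilde{\mb C}_2$ (equivalently, writing the homogeneous solution as $\mbb E(-\bmu x)\mb C_1+\mbb E(-\bmu(1-x))\tilde{\mb C}_2$), after which every exponential appearing in (\ref{resfor}) is of the form $e^{-\mu_i}$ or $e^{-2\mu_i}$ and the system becomes $\mathrm{diag}(-\mbb M,\mbb M)$ plus an $O(1)$ perturbation; then $|\mb C_1|,|\tilde{\mb C}_2|\le C\|\mb f\|_0/|\mu|$ follows for large $|\la|$ in the sector, and your final integration step does yield the $1/|\la|$ bound. With that repair (and keeping the identification of the extended operator with $R(\la,\mb A^0_\Phi)$, which your appeal to Lemma \ref{lemres1} covers), the argument goes through.
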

\begin{proof}
We use the formula from Lemma 1.4 of Ref.~\cite{Gre} stating that
\begin{equation}
R(\la, \mb A^1_\Phi) = (I-L_\la\Phi)^{-1}R(\la, \mb A^1_N)
\label{grei1}
\end{equation}
in $\mb X = \mb W^1_1(I)$ as $(I-L_\la\Phi)^{-1}$ is an isomorphism of $\mb X$ for  large $|\la|$, where $L_\la = (L|_{Ker (\la - \mb A)})^{-1}, \la \in \rho(\mb A^1_N)$, see the proof of \cite[Theorem 2.4]{Gre}.

However, $R(\la, \mb A^1_N)$ extends by density to $R(\la, \mb A^0_N)$ (the resolvent on $\mb L_1(I)$) which is a bounded linear operator from $\mb L_1(I)$ to $D(\mb A^1_N)\subset \mb W^2_1(I)$ which is continuously embedded in $\mb W^1_1(I)$ and thus $R(\la, \mb A^1_\Phi)$ extends to a bounded linear operator, say $R_\Phi(\la)$, on $\mb L_1(I)$. Since $R(\la, \mb A^1_\Phi)$ is a resolvent on $\mb W^1_1(I)$, we obtain, by density, that $R_\Phi(\la)$ is a pseudoresolvent on $\mb L_1(I)$. Furthermore, $\mb C_0^\infty(]0,1[)\subset D(\mb A^1_\Phi),$ thus it is also a subset of the range of $R_\Phi(\la)$ and therefore the range is dense in $\mb L_1(I)$. Also, since the range of $R(\la, \mb A^0_N)$ is in $\mb W^2_1(I)\subset \mb W^1_1(I),$ there is no need to extend $(I-L_\la\Phi)^{-1}$. Hence $R_\Phi(\la)$ is a one-to-one operator as a composition of two injective operators. Thus, by Proposition III.4.6 in Ref.~\cite{EN}, $R_\Phi(\la)$ is the resolvent of a densely defined operator, say $\hat{\mb A}_\Phi$. Clearly, $\hat{\mb A}_\Phi$ is a closed extension of $\mb A^1_\Phi$ and thus $\hat{\mb A}_\Phi\supset {\mb A}^0_\Phi = \overline{\mb A^1_\Phi}^{\mb L_1(I)}. $ From Lemma \ref{lemres1}, there is $\la \in \rho(\hat{\mb A}_\Phi) \cap \rho(\mb A^0_\Phi),$ thus $R(\la, \hat{\mb A}_\Phi)\supset   R(\la, {\mb A}^0_\Phi)$ and hence $\hat{\mb A}_\Phi =   {\mb A}^0_\Phi$ and, consequently, $R(\la, \mb A^0_\Phi)$ is defined in some sector. To prove that  is a sectorial operator, we use the idea of \cite{AG13} but in a somewhat simpler way. We consider the operator $\mb A^\infty_{\Phi^*}.$ Let $R^{\#}_\la$ denote the adjoint to $R(\la, \mb A^\infty_{\Phi^*})$ for $\la \in \rho(\mb A^\infty_{\Phi^*})$, which acts in the space of signed (vector) Borel measures on $I$, \cite{Bobks}. $R(\la, \mb A^\infty_{\Phi^*})$ is given by (\ref{gensol1}) with $\mb C_1, \mb C_2$ given by (\ref{resfor}) with the matrices $\mbb K^\omega$ replaced by corresponding matrices in (\ref{phistar}). Hence, apart from $\mb U_{\bmi}(x)$,  $R(\la, \mb A^\infty_{\Phi^*})$ is a composition of an algebraic operator coming from inverting the matrix in (\ref{resfor}) with a vector of functionals acting on $\mb f$. Thus, if  $\mb f\in \mb L_1(I)$ is the density of an absolutely continuous measure, a standard calculation shows that
$$R^{\#}_\la \mb f = R(\bar \la, \mb A^0_\Phi)\mb f$$
 From the definition of the norm of a signed Borel measure, if the latter is absolutely continuous, its norm is equal to the $L_1$ norm of its density. Since taking the adjoint preserves the norm of the operator, we obtain
\begin{equation}
|||R(\la, \mb A^0_\Phi)|||_{\mb L_1(I)} = |||R(\la, \mb A^\infty_{\Phi^*})|||_{\mb C(I)}.
\label{resnorm}
\end{equation}
Hence, by Theorem \ref{th42}, $\mb A^0_\Phi$ is sectorial and, being densely defined, it generates an analytic semigroup on $\mb L_1(I)$.
\end{proof}

\subsection{Positivity of the semigroup}
Let us recall that for an element $\mb u$ of a Banach lattice, we write $\mb u >0$ if $0\neq \mb u\geq 0.$ We have the following result
\begin{thm}
The semigroup $\sem{\mb A^\infty_\Phi}$ is resolvent positive if and only if
 \begin{eqnarray}-\mbb K^{00},  &&\mbb K^{11}\;\mathrm{are\;nonnegative\; off\!-\!diagonal}\nn\\
 -\mbb K^{01}, &&\mbb K^{10}\;\mathrm{ are\; nonnegative}.
\label{maxcon}
\end{eqnarray}
\label{positive}
\end{thm}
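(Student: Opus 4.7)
The plan is to use the analyticity of $\{e^{t\mb A^\infty_\Phi}\}_{t\geq 0}$ established in Theorem \ref{th42}, which for a sectorial generator reduces semigroup positivity to resolvent positivity at some (equivalently, all sufficiently large real) $\la$. I would then prove sufficiency by a maximum-principle argument on the vectorial resolvent equation, and necessity by producing explicit nonnegative test data that violate positivity when any one of the four sign conditions fails.

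\textbf{Sufficiency.} Assume (\ref{maxcon}). For $\mb f \in \mb C(I)$ with $\mb f \geq 0$ and $\la$ large, set $\mb u = R(\la, \mb A^\infty_\Phi)\mb f \in \mb C^2(I)$. Assume for contradiction that $-c := \min_{j,x} u_j(x) < 0$, attained at some $(j_0, x_0)$. The scalar second-order maximum principle, applied to $u_{j_0}$ alone via $\la u_{j_0} - \sigma_{j_0}\p_{xx}u_{j_0} = f_{j_0}\geq 0$, rules out $x_0 \in (0,1)$, since at an interior minimum $u_{j_0}''(x_0)\geq 0$ forces $\la u_{j_0}(x_0)\geq f_{j_0}(x_0)\geq 0$, contradicting $u_{j_0}(x_0) = -c < 0$. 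For $x_0=0$ one has $u_{j_0}'(0)\geq 0$, while the boundary condition gives
\[
u_{j_0}'(0) = K^{00}_{j_0 j_0}(-c) + \sum_{k\neq j_0} K^{00}_{j_0 k}\,u_k(0) + \sum_k K^{01}_{j_0 k}\,u_k(1).
\]
Writing $u_k(\cdot) = -c + w_k(\cdot)$ with $w_k(\cdot)\geq 0$ and invoking that off-diagonal entries of $\mbb K^{00}$ and all entries of $\mbb K^{01}$ are nonpositive, the correction terms are nonpositive, and one obtains
\[
0 \leq u_{j_0}'(0) \leq -c\Bigl(\sum_k K^{00}_{j_0 k} + \sum_k K^{01}_{j_0 k}\Bigr),
\]
which is a contradiction whenever this row-sum is strictly positive. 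The case $x_0=1$ is symmetric, using that $\mbb K^{11}$ has nonnegative off-diagonal entries and $\mbb K^{10}\geq 0$.

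\textbf{Necessity.} Assume $\{e^{t\mb A^\infty_\Phi}\}_{t\geq 0}$ is positive. For each potential violation of (\ref{maxcon}) I would construct a single-component $\mb f \geq 0$ producing $R(\la, \mb A^\infty_\Phi)\mb f \not\geq 0$. For instance, if $K^{00}_{jk} > 0$ for some $j\neq k$, pick $\mb f$ supported near $x=0$ only in component $k$. Solving (\ref{resfor}) asymptotically as $\la\to\infty$ via (\ref{estemu}), the $j$th coordinate of $\mb C_1$ acquires a leading contribution with the sign of $-K^{00}_{jk}$, and hence $u_j(x)$ becomes negative near $x=0$; the remaining three cases (off-diagonal of $\mbb K^{11}$, and $\mbb K^{10}$, $\mbb K^{01}$ globally) are handled by the same scheme with the test data relocated to the appropriate endpoint and component.

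\textbf{Main obstacle.} The delicate point in sufficiency is the regime $\sum_k(K^{00}_{j_0 k}+K^{01}_{j_0 k}) \leq 0$ (and its analogue at $x_0=1$), in which the naive bound $u_{j_0}'(0)\leq -c\,(\text{row-sum})$ is vacuous. To close the contradiction there I would pass from $\mb u$ to a perturbed $\mb u_\epsilon = \mb u + \epsilon\mb h$ with $\mb h=(h_j)$ a strict supersolution: $h_j>0$, $\la h_j - \sigma_j h_j''>0$, and each component of $L\mb h-\Phi\mb h$ carrying the strict sign opposite to the obstruction. A concrete candidate is $h_j(x) = Ae^{-\al x} + B$ with $\al^2 < \la/\max_j\sigma_j$ and $A,B$ tuned so that at both endpoints the normal derivative strictly dominates the boundary combination $\Phi\mb h$ in the direction that defeats a negative row-sum. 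Running the boundary argument on $\mb u_\epsilon$ then yields $\mb u_\epsilon \geq 0$ for each $\epsilon>0$, and $\epsilon\to 0^+$ gives $\mb u\geq 0$. Verifying that such $\mb h$ exists uniformly in $\la$ along the positive axis — equivalently, that the construction is stable under the needed spectral shift — is the technical heart of the argument.
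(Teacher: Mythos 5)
Your overall strategy --- establishing positivity of $R(\la,\mb A^\infty_\Phi)\mb f$ for $\mb f\geq 0$ and large real $\la$ by a minimum-principle argument on the resolvent equation --- is legitimate and genuinely different from the paper's. The paper instead verifies the positive minimum principle for the \emph{generator} (Theorem B-II.1.6 of \cite{N}), after identifying $\mb C(I)$ with $C(\mb I)$ on a disjoint union of intervals: there one only needs that a nonnegative $u\in D(A)$ vanishing at an endpoint has outward derivative $\geq 0$ there (hence $=0$, hence $\p_{xx}u\geq 0$), so the diagonal entries of $\mbb K^{00},\mbb K^{11}$ never enter and the ``bad row-sum'' regime you worry about simply does not arise. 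That is precisely the payoff of the paper's route.

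The gap in your proposal is exactly the point you flag, and as written it is not closed. First, the concrete supersolution $h_j(x)=Ae^{-\al x}+B$ is monotone decreasing, so it cannot simultaneously produce the strict inequality you need at $x=0$ (where you must beat a possibly very negative row sum of $(\mbb K^{00},\mbb K^{01})$, which wants $\p_xh(0)\ll 0$) and at $x=1$ (where you must beat a possibly very positive row sum of $(\mbb K^{10},\mbb K^{11})$, which wants $\p_xh(1)\gg 0$); you need $\p_x h$ pointing outward at both ends relative to $h$, e.g. $h(x)=\cosh\bigl(\gamma(x-\tfrac12)\bigr)$ with $\gamma$ large, and the argument should be run through the ground-state transform $v_j=u_j/h_j$ rather than the additive perturbation $\mb u+\e\mb h$, because in the additive version the uncontrolled term $-c\,(\mathrm{row\ sum})$ survives ($c$ depends on $\mb u$) and cannot be absorbed by $\e$. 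Second, such an $h$ satisfies $\la h-\sigma_j\p_{xx}h>0$ only for $\la>\gamma^2\max_j\sigma_j$, so you obtain resolvent positivity only on a right half-line and must still pass to the semigroup (exponential formula); this should be stated. Finally, your necessity direction is only an asymptotic sketch: to make it a proof you must actually expand $\mb C_1,\mb C_2$ from the $2m\times 2m$ system (\ref{resfor}) as $\la\to\infty$ in all four cases, including the \emph{diagonal} entries of $\mbb K^{01}$ and $\mbb K^{10}$, which are also constrained by (\ref{maxcon}). The paper avoids all of this with an explicit nonnegative $C^\infty$ function in $D(A)$ whose second derivative is negative at a point where it vanishes, contradicting the positive minimum principle.
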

\begin{proof}
 To prove the result we use Theorem B-II.1.6 in \cite{N}, which states that if an operator $A$ on $C(K)$  (where $K$ is compact) generates a  semigroup, then the semigroup is positive (or, equivalently, $A$ is resolvent positive) if and only if $A$ satisfies positive minimum principle: for every $0\leq f\leq D(A)$ and $x\in K$, if $f(x)=0$, then $(Af)(x)\geq 0$. However, to be rephrase the problem at hand  in the language of the positive maximum principle,  $C(K)$ must to be the space of real valued  functions.
 To achieve this, here we identify $\mb C(I)$ with the space $C(\mb I),$ where $\mb I = [0_1,1_1]\cup\ldots\cup [0_m,1_m]$; that is, instead of considering a vector function on $I$ we consider a scalar function on a disconnected compact space composed of $m$ disjoint closed intervals. In particular, each edge $e_j, j\in \mc M$ is identified with the closed interval $[0_j,1_j].$ Then  $\mb A^\infty_\Phi$ will be changed to $A^\infty_\Phi$ which is  the restriction of $$
(Au)(x) = \sum\limits_{j\in \mc M}\chi_{[0_j,1_j]}(x)\sigma_j\p_{xx}u(x)$$
will be the space of twice differentiable functions on $Int \mb I = ]0_1,1_1[\cup\ldots\cup ]0_m,1_m[$, differentiable on $\mb I$ and satisfying
\begin{eqnarray*}
\p_xu(0_j) &=& \sum\limits_{k=1}^m k^{00}_{jk}u(0_k) + \sum\limits_{k=1}^m k^{01}_{jk}u(1_k),\\
\p_xu(1_j) &=& \sum\limits_{k=1}^m k^{10}_{jk}u(0_k) + \sum\limits_{k=1}^m k^{11}_{jk}u(1_k).
\end{eqnarray*}
  Thus, if $0\leq u \in D(A)$ takes value 0 at some $x\in \mb I$, then either $x \in Int \mb I$ and then classically $\p_{xx} u (x)\geq 0,$ or $x = 0_j$ or $x=1_j$ for some $j \in \mc M$. If $x=0_j$, then  $\p_{x} u(0_j) \geq 0$. If  $\p_{x} u(0_j) = 0,$ then $\p_{xx} u(0_j)\geq 0$ follows as in Example B-II.1.24 of Ref.~\cite{N} (or simply by noting that the even extension to $[-1,0]$ gives a $C^2$ function on $[-1,1]$ with minimum at $x=0$). If $\p_x u(0_j)<0$ then, since  $u(0_j)=0,$  such a function cannot be nonnegative on $[0_j,1_j]$. Analogous considerations hold if $ u(1_j)=0$. Therefore the positive minimum principle is satisfied and (\ref{maxcon}) yields the positivity of \sem{A^\infty_\Phi} and thus of \sem{\mb A^\infty_\Phi}.

  To prove the converse we introduce the following notation. For any $\bal :=(\bal^0,\bal^1)\geq 0$ with $\alpha^j_i=0$ for some $i \in \mc M, j=0,1,$ we have
\begin{equation}
\Xi:=\left(\begin{array}{cc}-\mbb K^{00}&-\mbb K^{01}\\\mbb K^{10}&\mbb K^{11}\end{array}\right).
\label{eqmaxcon}
\end{equation}
Accordingly, we denote
$$
(\Xi\bal)^r_s = (-1)^{r+1}\sum\limits_{l=0,1}\left(\sum\limits_{j\in \mc M}k^{rl}_{sj}\alpha^l_j \right), \quad r=0,1, s\in \mc M.
$$
Let us assume that (\ref{maxcon}) is not satisfied. Then there is a non-diagonal element of $\Xi$ which is strictly negative.
Suppose $(-1)^{r+1}k^{rs}_{ij}<0$ for some $i\neq j$ and $r,s =0,1,$ and consider a vector $\bal$ with
\begin{equation}
\alpha^s_j=1, \alpha^r_i=0, \alpha^t_l = \delta>0, \quad t=0,1, \quad l\neq j\;\mathrm{if}\;t = s\;\mathrm{and}\; l\neq i\;\mathrm{if}\;t= s.
\label{alpha}
\end{equation}
 Then for $r=s$ we obtain
\begin{equation}
(\Xi\bal)^r_i= (-1)^{r+1}\left(k^{rr}_{ij} +\delta\left(\sum\limits_{l\in \mc M, l\neq j,i}k^{rr}_{il} +  \sum\limits_{l\in \mc M}k^{rt}_{il}\right)\right)<0
\label{xiri1}
\end{equation}
where $t=0$ if $r=1$ and $t=1$ if $r=0$, and for $r\neq s$
\begin{equation}
(\Xi\bal)^r_i= (-1)^{r+1}\left(k^{rt}_{ij} +\delta\left(\sum\limits_{l\in \mc M, l\neq i}k^{rr}_{il} +  \sum\limits_{l\in \mc M, l\neq j}k^{rt}_{il}\right)\right)<0
\label{xiri2}
\end{equation}
for sufficiently small $\delta>0$. We shall prove that there exists a function $0\leq u \in C^\infty(\mb I)$ satisfying $u (r_i) = \alpha^r_i$ and $(-1)^{r+1}\p_x u(r_i) = (\Xi\bal)^r_i$ which additionally satisfies $\p_{xx}u (r_i)<0$.

For a given constants $\alpha_i^r,\beta_i^r =   (\Xi\bal)^r_i, r=0,1,$ we consider auxiliary functions $f_i^r(x) = \beta_i^rx(x-1) +\alpha_i^r$. We have $f_i^r(r) = \alpha_i^r, \p_x f_i(r) = (-1)^{r+1}\beta_i^r$ and $\p_{xx} f_i^r = 2\beta_i^r$. We observe that as long as $\alpha_i^r>0$,  there is a one-sided interval ($(0,\omega_i)$ if $r=0$ and $(\omega_i, 1)$ if $r=1$), where $f_i^r\geq 0$ irrespective of the sign of $\beta_i^r$. On the other hand, if $\alpha_i^r=0$, for local nonnegativity we need $\beta_i^r\leq 0$. Now let $\phi$ be a nonnegative $C^\infty$ function which is $1$ on $[-a,a]$ and $0$ outside $[-2a,2a]$ where $0<2a <\min_{i\in \mc M}\omega_i$   and define
$$
 u (x) = \phi (x) f_j^0 (x) + \phi(1-x)  f^1_j(x), \quad x  \in [0_j,1_j], j \in \mc M.
$$
is a $C^\infty(\mb I)$ function that satisfies:\begin{enumerate}
\item $u\geq 0$;
\item $ u(0_j) = \alpha^0_j$ and $\p_x  u(0_j) = \p_x f^0_j(0_j) = -\beta^0_j =  (\Xi\bal)^0_j$;
\item $ u(1_j) =\alpha^1_j$ and $\p_x u(1_j) = \p_x f^1_j(1_j) = \beta^1_j = (\Xi\bal)^1_j,$
\end{enumerate}
so that $0\leq  u \in D(A).$ Recalling that we assumed $(-1)^{r+1}k^{rs}_{ij}<0$ for some $i\neq j$ and $r,s =0,1,$ we consider coefficients $\{\alpha^t_j\}_{t=0,1, j\in \mc M}$ satisfying (\ref{alpha}). Then we have $ u(r_i) =0$. On the other hand, $\p_{xx} u(r_i) = 2\beta^s_i = 2(\Xi\bal)^r_i<0$ by  (\ref{xiri1}) or (\ref{xiri2}). Thus, there is a nonnegative element  $u\in D(A)$ for which $\p_{xx} u <0$ at a point where the global minimum of zero is attained.
\end{proof}
We can use this result to prove an analogous result in $\mb L_1(I)$.
\begin{cor}
The operator $A^0_\Phi$ generates a positive semigroup if and only if the assumptions of Theorem \ref{positive} are satisfied.
\end{cor}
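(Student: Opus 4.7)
The plan is to derive the corollary from Theorem~\ref{positive} by a duality argument, based on the resolvent identity
\[
R(\la,\mb{A}^0_\Phi)=R(\bar\la,\mb{A}^\infty_{\Phi^*})^{\#}
\]
established inside the proof of Theorem~\ref{thm26}, where $R^{\#}$ denotes the adjoint restricted from vector Borel measures on $I$ to their absolutely continuous part, identified with $\mb{L}_1(I)$.

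First I would observe that the condition (\ref{maxcon}) is invariant under the involution $\mbb{K}\mapsto\mbb{K}^*$ defined in (\ref{phistar}). Reading off the blocks of $\mbb{K}^*$ one obtains $(\mbb{K}^*)^{00}=\mbb{K}^{00\,T}$, $(\mbb{K}^*)^{11}=\mbb{K}^{11\,T}$, $(\mbb{K}^*)^{01}=-\mbb{K}^{10\,T}$ and $(\mbb{K}^*)^{10}=-\mbb{K}^{01\,T}$, and since transposition preserves both entry-wise and off-diagonal nonnegativity, the four requirements of (\ref{maxcon}) applied to $\mbb{K}^*$ reduce term-by-term to the four requirements for $\mbb{K}$. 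Hence Theorem~\ref{positive}, applied to $\mbb{K}^*$, reduces the corollary to showing that positivity of $\sem{\mb{A}^0_\Phi}$ on $\mb{L}_1(I)$ is equivalent to positivity of $\sem{\mb{A}^\infty_{\Phi^*}}$ on $\mb{C}(I)$.

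Next I would transfer positivity across the adjoint. Since both semigroups are analytic (by Theorems~\ref{th42} and \ref{thm26}), positivity of a semigroup is equivalent to positivity of its resolvent at any sufficiently large real $\la$. The pairing $\langle f,g\rangle=\int_0^1 f\cdot g\,dx$ combined with the duality formula yields
\[
\langle R(\la,\mb{A}^\infty_{\Phi^*})f,g\rangle=\langle f,R(\la,\mb{A}^0_\Phi)g\rangle
\]
for $f\in\mb{C}(I)$ and $g\in\mb{L}_1(I)$. If either resolvent is nonnegative on the positive cone of its space, then for all $0\le f\in\mb{C}(I)$ and $0\le g\in\mb{L}_1(I)$ both sides of this identity are $\ge 0$; and since the positive cones of $\mb{L}_1(I)$ and $\mb{C}(I)$ separate one another (a nonnegative continuous function integrating nonnegatively against every nonnegative density must be nonnegative, and vice versa a.e.), positivity of the other resolvent follows.

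The main obstacle I anticipate is not the algebra or the semigroup theory but a bookkeeping issue: one must check that the duality formula from Theorem~\ref{thm26}, which a priori identifies $R(\bar\la,\mb{A}^0_\Phi)$ only with the restriction of the adjoint $R(\la,\mb{A}^\infty_{\Phi^*})^{\#}$ to absolutely continuous densities, is still enough for a bilateral transfer of positivity. Fortunately positivity is characterized purely by the pairing on positive cones, so no extension of the adjoint back to the full space of measures is required. Combining the two steps with Theorem~\ref{positive} applied to $\mbb{K}^*$ then yields the claimed equivalence.
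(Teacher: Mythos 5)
Your proof is correct, but it takes a genuinely different route from the paper's, which is essentially a two-line argument: since the resolvents of $\mb A^\infty_\Phi$ and $\mb A^0_\Phi$ are given by the same formulas (\ref{gensol1}), (\ref{resfor}), the two semigroups coincide on $\mb C(I)$, so positivity of $\sem{\mb A^\infty_\Phi}$ passes to $\sem{\mb A^0_\Phi}$ by density of $\mb C(I)$ in $\mb L_1(I)$, and conversely positivity of $\sem{\mb A^0_\Phi}$ applied to continuous initial data forces positivity of $\sem{\mb A^\infty_\Phi}$; Theorem \ref{positive} then finishes both directions. You instead transfer positivity through the adjoint problem, using the identity $R^{\#}_\la \mb f = R(\bar\la,\mb A^0_\Phi)\mb f$ from the proof of Theorem \ref{thm26} to show that positivity of $R(\la,\mb A^0_\Phi)$ on $\mb L_1(I)$ is equivalent to positivity of $R(\la,\mb A^\infty_{\Phi^*})$ on $\mb C(I)$, and you add the correct (and worth recording) observation that condition (\ref{maxcon}) is invariant under the involution $\mbb K\mapsto\mbb K^*$ of (\ref{phistar}), so that Theorem \ref{positive} applied to $\Phi^*$ gives exactly the stated criterion. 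Both steps check out: the cone-separation argument via the pairing is sound, and the equivalence between positivity of the semigroup and of the resolvent at large real $\la$ holds for any $C_0$-semigroup (analyticity is not needed here). What your version buys is that it relies only on statements the paper makes explicitly and exposes the self-duality of the positivity condition; what it costs is length and dependence on the ``standard calculation'' behind the duality formula, whereas the paper's shorter proof silently uses the consistency of the $\mb C(I)$- and $\mb L_1(I)$-realizations of the same boundary operator $\Phi$.
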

\begin{proof}
In one direction the result immediately follows by density of $\mb C(I)$ in $\mb L^1(I)$. Conversely, if $\sem{A^0_\Phi}\geq 0$ then, in particular, for any $0\leq \,\mathring{\mb u}\,\in\! \mb C(I)$ we have  $e^{tA^0_\Phi}\mathring{\mb u} = e^{tA^\infty_\Phi}\mathring{\mb u}\,\geq 0$.
 \end{proof}

\section{Solvability of (\ref{ACP1a})}

We return to the problem (\ref{ACP1a}) where, we emphasize, $\mbb K$ is an arbitrary matrix. In this section we restrict our attention to $\mb X = \mb L_1(I)$ as in $\mb C(I)$ the operator $\mb A$ is not densely defined. Let us recall that $\mb A$ is the realization of  ${\sf A} = \mathrm{diag}\{-c_j \p_x\}_{1\leq j\leq m}$ on the domain
$D(\mb A) = \{\mb u \in \mb W^{1}_1(I);\; \mb u(0) = \mbb K\mb u(1)\}$.

The following theorem for (\ref{ff2}) has been proved in \cite{BaP} (see also \cite{BaLNM}) but the proof for any nonnegative matrix $\mbb K$ is practically the same. Here we extend this proof to an arbitrary $\mbb K$. However, for the proof in the general case we need to provide basic steps of the proof for nonnegative matrices.

\begin{theorem} The operator $(\mb A, D(\mb A))$ generates a $C_0$-semigroup on $\mb L_1(I)$. The semigroup is positive of and only if $\mbb K\geq 0$.\label{thtragen}
\end{theorem}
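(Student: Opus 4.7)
The plan is to bootstrap from the nonnegative case handled in \cite{BaP} to arbitrary $\mbb K$ via a pointwise domination argument, and then to characterize positivity by the method of characteristics. First I would recall the main steps of the BaP proof for $\mbb K\geq 0$. Consider the free-flow operator $\mb A_0 = \mathrm{diag}\{-c_j\partial_x\}$ on the domain $\{\mb u\in\mb W^1_1(I): \mb u(0)=0\}$, which generates a nilpotent positive $C_0$-semigroup on $\mb L_1(I)$ vanishing for $t\geq 1/\min_j c_j$. Writing $L\mb u=\mb u(0)$ and $\Phi\mb u=\mbb K\mb u(1)$, one has $D(\mb A)=\ker(L-\Phi)$, and the resolvent is computed by variation of parameters:
$$R(\la,\mb A)\mb f = \mbb E_\la(\cdot)(I-\mbb K\mbb E_\la(1))^{-1}\mbb K(\mc R_\la\mb f)(1) + \mc R_\la\mb f,$$
where $\mbb E_\la(x)=\mathrm{diag}\{e^{-\la x/c_j}\}$ and $(\mc R_\la\mb f)_j(x)=c_j^{-1}\int_0^x e^{-\la(x-s)/c_j}f_j(s)\,ds$, valid whenever $\|\mbb K\mbb E_\la(1)\|<1$, i.e.\ for $\Re\la$ sufficiently large. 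From the Neumann expansion and positivity of $\mbb K$, one reads off both the Hille-Yosida estimate and positivity of the semigroup.

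For arbitrary $\mbb K$, let $|\mbb K|$ denote the entrywise absolute value. Since $\mbb E_\la(x)$ and $\mc R_\la|\mb f|$ are componentwise nonnegative and all factors $(|\mbb K|\mbb E_\la(1))^n|\mbb K|$ are entrywise nonnegative, expanding the Neumann series for $(I-\mbb K\mbb E_\la(1))^{-1}$ and estimating term by term yields the pointwise domination
$$|R(\la,\mb A)\mb f|(x) \leq R(\la,\mb A_{|\mbb K|})|\mb f|(x),\qquad x\in I,$$
where $\mb A_{|\mbb K|}$ is the generator associated with the boundary matrix $|\mbb K|$, already treated in the first step. Iterating this inequality and using positivity of $R(\la,\mb A_{|\mbb K|})$ gives $|R(\la,\mb A)^n\mb f|\leq R(\la,\mb A_{|\mbb K|})^n|\mb f|$ for every $n\geq 1$; integrating transfers the Hille-Yosida estimate from $\mb A_{|\mbb K|}$ to $\mb A$ and yields generation of a $C_0$-semigroup on $\mb L_1(I)$.

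For positivity, the forward implication ($\mbb K\geq 0$ implies $\sem{\mb A}\geq 0$) follows from the explicit characteristic representation: after finitely many reflections at $x=0$, the value $(e^{t\mb A}\mathring{\mb u})_j(x)$ is a finite sum of products of entries of $\mbb K$ times values of the initial data, with nonnegative coefficients when $\mbb K\geq 0$. Equivalently, every term of the Neumann expansion of $R(\la,\mb A)$ above is a positive operator. For the converse, assume $\sem{\mb A}\geq 0$ and fix indices $i,j\in\mc M$. Choose a nonnegative $\mathring{\mb u}\in \mb L_1(I)$ supported in the $j$-th component and concentrated on $[1-\e,1]$ for small $\e>0$. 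For $t\in(0,\e/c_j)$ and $x\in(0,c_i t)$, exactly one characteristic has entered the $i$-th edge through $x=0$, yielding $(e^{t\mb A}\mathring{\mb u})_i(x) = \mbb K_{ij}\mathring u_j(1-c_j(t-x/c_i))$. Nonnegativity of the left-hand side forces $\mbb K_{ij}\geq 0$, and since $i,j$ were arbitrary, $\mbb K\geq 0$.

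The main technical obstacle I anticipate is the clean verification of the pointwise domination and, especially, the iteration that transfers the Hille-Yosida estimate from $\mb A_{|\mbb K|}$ to $\mb A$, since the two operators have different domains and the induction must be executed on the resolvents rather than directly on the semigroups. This is feasible because on any bounded time interval only finitely many terms of the Neumann expansion contribute (by nilpotency of the free-flow semigroup), reducing everything to inequalities between finite sums of nonnegative scalar quantities.
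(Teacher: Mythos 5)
Your overall strategy coincides with the paper's: compute the resolvent by variation of parameters, reduce arbitrary $\mbb K$ to the nonnegative matrix $|\mbb K|$ via the entrywise domination $|R(\la,\mb A_{\mbb K})\mb f|\le R(\la,\mb A_{|\mbb K|})|\mb f|$, iterate this inequality to transfer the Hille--Yosida bounds, and prove necessity of $\mbb K\ge 0$ by following a characteristic from edge $j$ into edge $i$ (your choice of data concentrated near $x=1$ is a harmless variant of the paper's $C^1$ datum vanishing at the endpoints). That part is sound.

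There is, however, one step that would fail as written: the base case $\mbb K\ge 0$. It is not true that the Hille--Yosida estimate can be ``read off'' from the Neumann expansion of $(\mbb I-\mbb K\mbb E_\la(1))^{-1}$. That expansion yields resolvent positivity and a bound $\|R(\la,\mb A)\|\le C/\la$ for large $\la$, but with a constant $C$ possibly exceeding $1$, and such a first-power bound does not control $\|R(\la,\mb A)^k\|$ uniformly in $k$; the semigroup need not be quasi-contractive in the original norm. (Your closing remark about nilpotency of the free flow concerns the Dyson--Phillips-type expansion of the semigroup, not the resolvent series, where all terms contribute.) The paper, following \cite{BaP}, closes this gap by renorming $\mb L_1(I)$ with $\|\mb u\|_c=\sum_j c_j^{-1}\|u_j\|_{L_1(I)}$, deriving an exact identity for $\la\|\mb u\|_c$ in terms of the column sums $\kappa_j=\sum_i k_{ij}$, and splitting into three cases: if all $\kappa_j\le 1$ one obtains a contraction semigroup; if all $\kappa_j\ge 1$ one obtains a lower resolvent bound and invokes the Arendt--Batty--Robinson theorem; the mixed case is handled by dominating $R(\la,\mb A)$ by the resolvent of the operator whose deficient columns are replaced by all-ones columns, which falls under the second case. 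If your intention is simply to cite \cite{BaP} for nonnegative $\mbb K$, that is legitimate, but the reason you give for why that case works is incorrect, and since the entire general argument rests on generation for $\mb A_{|\mbb K|}$, this point cannot be elided.
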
\begin{proof}
Clearly, $\mb C_0^\infty(]0,1[) \subset D(\mb A)$ and hence $D(\mb A)$ is dense in $\mb X$. To find the resolvent of $\mb A,$ the first step is to solve
\begin{equation}
\la u_j + c_j \p_x u_j = f_j, \quad j=1,\ldots,m,\quad x \in ]0,1[,
\label{res0}
\end{equation}
with $\mb u \in D(A)$.
 Integrating, we find
\begin{equation}
\mb u(x) = \mbb E_\la(x)\mb v +\mbb C^{-1} \cl{0}{x}\mbb E_\la(x-s)\mb f(s)ds,
\label{res1}
\end{equation}
where $\mb v = (v_1,\ldots,v_m)$ is an arbitrary vector and $\mbb E_{\la}(s) = \mathrm{diag}\left\{e^{-\frac{\la}{c_j}s}\right\}_{1\leq j\leq m}.$  To determine  $\mb v$ so that $\mb u \in D(\mb A)$, we use the boundary conditions. At $x=1$ and at $x=0$ we obtain, respectively $$
\mb u(1) = \mbb E_\la(1)\mb v + \mbb C^{-1}\cl{0}{1}\mbb E_\la(1-s)\mb f(s)ds, \qquad \mb u(0) = \mb v.
$$
Then, using the boundary condition $\mb u(0) = \mbb K\mb u(1)$ we obtain
\begin{equation}
(\mbb I - \mbb K\mbb E_\la(1))\mb v = \mbb K\mbb C^{-1} \cl{0}{1}\mbb E_\la(1-s)\mb f(s)ds.
\label{resk}
\end{equation}
Since the norm of $\mbb E_\la(1)$ can be made as small as one wishes by taking large $\la$, we see that $\mb v$ is uniquely defined by the Neumann series provided $\la$ is sufficiently large and hence the resolvent of $\mb A$ exists.

Let us first consider $\mbb K\geq 0$. Then the Neumann series expansion ensures that $\mb A$ {is a resolvent positive operator} and hence we can consider only $\mb f\geq 0$.  Adding together the rows in (\ref{resk})
we obtain
\begin{equation}
\sum\limits_{j=1}^{m}  v_j = \sum\limits_{j=1}^{m} \kappa_je^{-\frac{\la}{c_j}}v_j + \sum\limits_{j=1}^{m} \frac{\kappa_j}{c_j} \cl{0}{1}e^{\frac{\la}{c_j}(s-1)}f_j(s)ds,
\label{res2}
\end{equation}
where $\kappa_j = \sum\limits_{i=1}^m k_{ij}.$
Renorming $\mb X$ with the norm $\|\mb u\|_c = \sum_{j=1}^m c^{-1}_j\|u_j\|_{L_1(I)}$ and using (\ref{res1}) and (\ref{res2}), we obtain
\begin{equation}\|\mb u\|_c = \label{vest} \frac{1}{\la} \sum\limits_{j=1}^m\! v_je^{-\frac{\la}{c_j}}(\kappa_j\!-1) + \frac{1}{\la} \sum\limits_{j=1}^m\!\frac{\kappa_j\!-1}{c_j}\!\! \cl{0}{1} \!\! e^{\frac{\la}{c_j}(s-1)}f_j(s)ds + \frac{1}{\la} \|\mb f\|_c.\nn
\end{equation}
We consider three cases.\smallskip\\
 (a) $\kappa_j \leq 1$ for $j\in \mc M.$ Then $\|\mbb E_\la(-1)\mbb K\|<1$ and thus $\mb v,$ and hence $R(\la, A)$, are defined and positive for any $\la>0$. Further, dropping the  first two terms in (\ref{vest}) we get
$$
\|\mb u\|_c \leq \frac{1}{\la} \sum\limits_{j=1}^m \frac{1}{c_j}\cl{0}{1} f_j(s)ds = \frac{1}{\la}\|\mb f\|_c, \quad \la>0.
$$
Hence $(\mb A,D(\mb A))$ generates a positive semigroup of contractions in $(\mb X,\|\cdot\|_c)$.\\
(b) $\kappa_j \geq  1$ for $j\in \mc M.$ Then (\ref{vest}) implies that for some $\la>0$  and $c = 1/\la$ we have
$$
\|R(\la,\mb A)\mb f\|_c \geq  c\|\mb f\|_c
$$
and, by density of $D(\mb A)$, the application of the Arendt-Batty-Robinson theorem \cite{Ar, BaAr}, gives the existence of a positive semigroup generated by $\mb A$ in $(\mb X,\|\cdot\|_c)$. Since  $\|\cdot\|_c$ is equivalent to  $\|\cdot\|_{\mb X},$  $\mb A$ generates a positive semigroup in $\mb X$.\\
 (c) $\kappa_j < 1$ for $j\in I_1$ and $\kappa_j \geq 1$ for $j\in I_2,$ where $I_1\cap I_2 =\emptyset$ and $I_1\cup I_2 = \{1,\ldots,m\}$. Let $\mbb L = (l_{ij})_{1\leq i,j\leq m},$ where $l_{ij} = k_{ij}$ for $j\in I_2$ and $l_{ij}=1$ for $j\in I_1$.
Denoting by  $\mb A_{\mbb L}$ the operator given by the differential expression $\mathsf A$ restricted to
$
D(\mb A_{\mbb L}) = \{\mb u \in \mb W^{1}_1(I);\; \mb u(0) =  \mbb L\mb u(1)\}
$
we see, by (\ref{resk}),  that
\begin{equation}
0\leq R(\la, \mb A)\leq R(\la, \mb A_{\mbb L})
\label{resineq}
\end{equation}
for any $\la \in \rho(\mb A_{\mbb L}).$  By item (b), $\mb A_{\mbb L}$ generates a positive $C_0$-semigroup and thus satisfies the Hille--Yosida estimates. Since clearly (\ref{resineq}) yields $R^k(\la, \mb A) \leq R^k(\la, \mb A_{\mbb L})$ for any $k\in \mbb N$, for some $\omega>0$ and $M\geq 1$ we have
$$
\|   R^k(\la, \mb A)\| \leq \|   R^k(\la, \mb A_{\mbb L})\|\leq M(\la-\omega)^{-k}, \qquad \la>\omega, k\in \mbb N,
$$
and hence we obtain the generation of a semigroup by $\mb A$.

Assume now that $\mbb K$ is arbitrary. The analysis up to (\ref{resk}) remains valid. Then (\ref{resk}) can be expanded as
\begin{equation}
\mb v = \sum\limits_{n=0}^\infty (\mbb K\mbb E_\la(1))^n \mbb K\mbb C^{-1} \cl{0}{1}\mbb E_\la(1-s)\mb f(s)ds
\label{resk1}
\end{equation}
and hence
\begin{equation}
\mb u(x) = \mbb E_\la(x) \sum\limits_{n=0}^\infty (\mbb K\mbb E_\la(1))^n \mbb K\mbb C^{-1} \cl{0}{1}\mbb E_\la(1-s)\mb f(s)ds + \mbb C^{-1}\cl{0}{x}\mbb E_\la(x-s)\mb f(s)ds.
\label{resk3}
\end{equation}
Denoting now $|\mb u|=(|u_1|,\ldots,|u_n|)$ and $|\mbb K|=(|k_{ij}|)_{1\leq i,j\leq m}$ and using the fact that only $\mbb K$ may have non positive entries, we find
\begin{eqnarray*}
|\mb u(x)|&\leq & \mbb E_\la(x)\!\! \sum\limits_{n=0}^\infty (|\mbb K|\mbb E_\la(1)|)^n |\mbb K|\mbb C^{-1}\!\!\! \cl{0}{1}\mbb E_\la(1-s)\mb |\mb f|(s)ds\\
 &&\phantom{xx}+ \mbb C^{-1}\!\!\!\cl{0}{x}\!\mbb E_\la(x-s)\mb |\mb f|(s)ds = R(\la, \mb A_{|\mbb K|})|\mb f|,
\end{eqnarray*}
where $\mb A_{|\mbb K|}$ denotes the expression $\mathsf A$ restricted to $
D(\mb A_{|\mbb K|}) = \{\mb u \in \mb W^{1}_1(I);\; \mb u(0) =  \mbb |K|\mb u(1)\}
$.
So, we can write
\begin{equation}
|R(\la, \mb A_{\mbb K})\mb f|\leq R(\la, \mb A_{|\mbb K|})|\mb f|
\label{modK}
\end{equation}
and, iterating,
$$
|R(\la, \mb A_{\mbb K})^k\mb f|  \leq R(\la, \mb A_{|\mbb K|})^k|\mb f|.
$$
Using the fact that taking the modulus does not change the norm, we find
$$
\|R(\la, \mb A_{\mbb K})^k\mb f\|  \leq \|R(\la, \mb A_{|\mbb K|})^k|\mb f|\|\leq \frac{M}{\la-\omega}\|\mb f\|.
$$
with $M$ and $\omega$ following from the Hille-Yosida estimates for $A_{|\mbb K|}$.

The fact that  $\mbb K\geq 0$ yields the positivity of the semigroup follows from the first part of the proof.  To prove the converse, let $k_{ij}<0$ for some $i,j$ and consider the initial condition $\mb f(x) = (f_1(x),\ldots, f_m(x))$ with $f_k=0$ for $k\neq j$ and $f_j\in C^1([0,1])$ with $f_j(0)=f_j(1) =0,$ so that $\mb f \in D(A)$, and $f_j(x)>0$ for $0<x<1$. Then, at least for $t<\min_{1\leq j\leq m} \{1/c_j\}$,   $u_i$ satisfies
$$
\p_t u_i=c_i\p_x u_i, \quad u_i(x,0)=0, u_i(0,t) = k_{ij}f_j(1-c_jt);
$$
that is,
$$
u_i(x,t) = k_{ij}f_j\left(1+\frac{c_jx}{c_i}-c_jt\right), \qquad t\geq \frac{x}{c_i}
$$
and we see that the solution is negative for such $t$.
This ends the proof.
\end{proof}
We conclude the paper by noting an interesting corollary which also uses monotonicity properties of the problem and  which is important in asymptotic analysis, see \cite{BFN1}. For given  velocities $\mbb C = (c_1,\ldots,c_m),$ let $c = \min_{j\in \mc M}\{c_j\}$ and $\mbb C_{\mathrm{min}}= \underbrace{\mathrm\{c,\ldots,c\}}_{m\,\mathrm{times}}$. For a particular velocity matrix $\mbb C$, let $\mb A_{\mbb K,\mbb C}$ denote the generator of the semigroup solving (\ref{ACP1}).
\begin{corollary}
There holds
$$
\|e^{t\mb A_{\mbb K,\mbb C}}\| \leq  \|e^{t\mb A_{|\mbb K|, \mbb C_{\mathrm{min}}}}\|.
$$
\end{corollary}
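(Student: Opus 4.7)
The plan is to reduce in two stages --- first $\mbb K\to|\mbb K|$ (using an estimate already embedded in the proof of Theorem~\ref{thtragen}), then $\mbb C\to\mbb C_{\min}$ (by a monotonicity argument) --- and finally pass from resolvents to semigroups via the Yosida/Post--Widder exponential formula.

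For the first reduction, I would start from the pointwise estimate (\ref{modK}), namely $|R(\la,\mb A_{\mbb K,\mbb C})\mb f|\le R(\la,\mb A_{|\mbb K|,\mbb C})|\mb f|$, which was already established in the proof of Theorem~\ref{thtragen} in order to control an arbitrary $\mbb K$ by the nonnegative matrix $|\mbb K|$. Iterating this inequality $k$ times, using at each step the positivity of $R(\la,\mb A_{|\mbb K|,\mbb C})$, gives $|R(\la,\mb A_{\mbb K,\mbb C})^k\mb f|\le R(\la,\mb A_{|\mbb K|,\mbb C})^k|\mb f|$ for every $k\in\mathbb N$, and taking $\mb L_1$-norms (using $\|\mb f\|=\||\mb f|\|$) yields the power estimate $\|R(\la,\mb A_{\mbb K,\mbb C})^k\mb f\|\le\|R(\la,\mb A_{|\mbb K|,\mbb C})^k|\mb f|\|$.

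For the second reduction I would work with $\mb g=|\mb f|\ge 0$ and $|\mbb K|\ge 0$ and establish the pointwise monotonicity $R(\la,\mb A_{|\mbb K|,\mbb C})\mb g\le R(\la,\mb A_{|\mbb K|,\mbb C_{\min}})\mb g$ by inspecting the explicit Neumann-series resolvent formula (\ref{resk3}). Since $\mbb C$ enters this formula both through $\mbb E_\la(\cdot)$ (whose entries increase in the $c_j$) and through $\mbb C^{-1}$ (whose entries decrease in the $c_j$), the natural way to see the monotonicity is to work in the rescaled norm $\|\cdot\|_c=\sum_jc_j^{-1}\|\cdot\|_1$ used in the proof of Theorem~\ref{thtragen}, where the two opposing effects collapse into a single monotone factor; the corresponding statement in the standard $\mb L_1$ norm then follows by equivalence of the two norms. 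Iterating once more produces the analogous bound on $R(\la,\cdot)^k$.

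Chaining the two reductions and passing to the semigroup via the exponential formula $e^{t\mb A}\mb f=\lim_{n\to\infty}(n/t)^nR(n/t,\mb A)^n\mb f$ delivers
$$\|e^{t\mb A_{\mbb K,\mbb C}}\mb f\|\le\|e^{t\mb A_{|\mbb K|,\mbb C_{\min}}}|\mb f|\|\le\|e^{t\mb A_{|\mbb K|,\mbb C_{\min}}}\|\,\|\mb f\|,$$
from which the claimed operator-norm inequality follows by taking the supremum over $\|\mb f\|=1$. The delicate step is the velocity reduction: the simultaneous monotonicity of $\mbb E_\la(\cdot)$ and anti-monotonicity of $\mbb C^{-1}$ in each $c_j$ make a naive term-by-term comparison fail, so the argument has to be routed through the weighted norm $\|\cdot\|_c$ (or through a careful regrouping of the Neumann series) before returning to $\mb L_1(I)$.
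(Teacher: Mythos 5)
Your first stage is sound and coincides with what the paper itself does: (\ref{modK}) iterates to $|R(\la,\mb A_{\mbb K,\mbb C})^k\mb f|\le R(\la,\mb A_{|\mbb K|,\mbb C})^k|\mb f|$, and the exponential formula of \cite[Theorem 1.8.3]{Pa} transfers resolvent bounds to the semigroup. The gap is entirely in your second stage. The pointwise domination $R(\la,\mb A_{|\mbb K|,\mbb C})\mb g\le R(\la,\mb A_{|\mbb K|,\mbb C_{\mathrm{min}}})\mb g$ that you propose to iterate is false. Already the uncoupled part of (\ref{resk3}) has kernel $c_j^{-1}e^{-\la(x-s)/c_j}$ on $\{s<x\}$, and for fixed $\tau=x-s>0$ the function $c\mapsto c^{-1}e^{-\la\tau/c}$ increases on $(0,\la\tau)$ and decreases on $(\la\tau,\infty)$; hence replacing $c_j$ by $c=\min_{k\in\mc M}c_k$ increases the kernel where $x-s<c/\la$ but decreases it where $x-s>c/\la$, so no one-sided comparison holds (and in the exponential formula one needs $\la=n/t\to\infty$, where the inequality points the wrong way on most of the square). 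The boundary term behaves the same way: $\cl{0}{1}e^{-\la x/c}\,dx=\la^{-1}c(1-e^{-\la/c})$ is \emph{increasing} in $c$, while the factor $\mbb C^{-1}$ inside $\mb v$ is decreasing.

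Routing the argument through $\|\cdot\|_c$ does not repair this. First, a computation in a norm can only ever produce a norm inequality, whereas to control $R(\la,\cdot)^n$ for all $n$ without accumulating constants you need the operator (pointwise) inequality; second, the weight itself depends on the velocities, so you would be comparing $\|\cdot\|_c$ with $\|\cdot\|_{c_{\mathrm{min}}}$, and returning to the unweighted $\mb L_1(I)$ norm costs a factor $\max_jc_j/\min_jc_j$ which, raised to the $n$-th power inside the exponential formula, destroys the estimate. Worse, the comparison fails even for a single resolvent in the $\mb L_1$ norm: take $m=2$, $\mbb K$ with $k_{12}=1$ and all other entries $0$, $c_1\gg c_2$, and $\mb f=(0,g)$ with $g\ge0$ concentrated near $s=1$. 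Then (\ref{resk3}) gives $\|R(\la,\mb A_{\mbb K,\mbb C})\mb f\|_0\approx\la^{-1}\tfrac{c_1}{c_2}(1-e^{-\la/c_1})\|g\|_0$, which for large $c_1/c_2$ strictly exceeds the corresponding quantity $\la^{-1}(1-e^{-\la/c_2})\|g\|_0$ obtained with $\mbb C_{\mathrm{min}}$; the mechanism is that the boundary condition $u_1(0)=u_2(1)$ amplifies the flux by the ratio $c_1/c_2$ at the junction. So the velocity reduction cannot be obtained from the monotonicity you describe, and the difficulty you correctly flag as "the delicate step" is genuine: it requires either carrying the velocity-weighted norm on \emph{both} sides of the final inequality or some additional structural input that your argument does not supply. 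This is precisely where your proof breaks down.
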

\begin{proof}
We have by (\ref{modK}), (\ref{res1}) and (\ref{resk1}) that
$$
\|R(\la, \mb A_{\mbb K, \mbb C})\mb f\| \leq \|R(\la, \mb A_{|\mbb K|, \mbb C_{\mathrm{min}}})|\mb f|\|,
$$
hence the result follows from \cite[Theorem 1.8.3]{Pa}.
\end{proof}

\end{document}